\newtheorem{thm}{Theorem}
\newtheorem{lem}{Lemma}
\newtheorem{proposition}{Proposition}
\newdefinition{definition}{Definition}	
\newdefinition{rmk}{Remark}
\newcommand{\norm}[1]{\Vert #1 \Vert}
\journal{}
\begin{document}
	
	\begin{frontmatter}
		
		\title{ Global boundedness and blow-up in a repulsive chemotaxis-consumption system in higher dimensions}
		
		%% Group authors per affiliation:
		\author[1]{Jaewook Ahn}
		\ead{jaewookahn@dgu.ac.kr}
		
		\author[2]{Kyungkeun Kang}
		\ead{kkang@yonsei.ac.kr}

		\author[3]{Dongkwang Kim\corref{correspondingauthor}}
		\cortext[correspondingauthor]{Corresponding author}
		\ead{dkkim@unist.ac.kr}
\address[1]{Department of Mathematics, Dongguk University, Seoul, Republic of Korea}
\address[2]{School of Mathematics \& Computing(Mathematics), Yonsei University, Seoul, Republic of Korea}
\address[3]{Department of Mathematical Sciences, Ulsan National Institute of Science and Technology (UNIST), Ulsan, Republic of Korea}

\begin{abstract}
This paper investigates the repulsive chemotaxis-consumption model
\begin{align*}
\partial_t u &= \nabla \cdot (D(u) \nabla u) + \nabla \cdot (u \nabla v), \\
0 &= \Delta v - uv,
\end{align*}
in an \(n\)-dimensional ball, \(n \ge 3\), where the diffusion coefficient \(D\) is an appropriate extension of the function \(0\le\xi\mapsto(1+\xi)^{m-1}\) for some \(m>0\). 
Under the  boundary conditions
\[
\nu \cdot (D(u) \nabla u + u \nabla v) = 0
\quad\text{ and }\quad v = M>0,\]
 we first demonstrate that for \(m > 1\), or \(m = 1\) with \(0 < M < 2/(n-2)\), the system admits globally defined classical solutions that are uniformly bounded in time for any choice of sufficiently smooth radial initial data. This result is further extended to the case \(0<m<1\) when \(M\) is chosen to be sufficiently small, depending on the initial conditions. In contrast, it is shown that for \(0 < m < \frac{2}{n}\), the system exhibits blow-up behavior for sufficiently large \(M\).
\end{abstract}

		\begin{keyword}
			repulsive chemotaxis-consumption system, blow-up, global boundedness
	       \MSC[2010]    35B44 \sep 35K51 \sep 92C17
		\end{keyword}
		
	\end{frontmatter}
	\section{Introduction} 
	Since the pioneering work of Keller and Segel in the early 1970s \cite{keller_initiation_1970,keller_model_1971}, the mathematical investigation of chemotaxis -- the directed movement of organisms in response to chemical gradients -- has significantly enhanced the understanding of various biological mechanisms. 
	Of particular interest is the chemotactic behavior of aerobic bacteria that consume oxygen as a nutrient, a subject that has attracted considerable attention due to its complex dynamics observed in both experimental and simulation settings \cite{dombrowski_self-concentration_2004, tuval_bacterial_2005}. 
		Accordingly, the following equations have been derived and subjected to rigorous analysis as a  fundamental model of this nutrient-based chemotaxis interaction \cite{lankeit_depleting_2023}:
	\begin{equation}\label{eqn_consumption}
	\begin{split}
	\partial_t u &= \nabla\cdot(D(u)\nabla u) -\nabla\cdot(uS(u,v)\nabla v),\\
	\partial_t v&= \Delta v -uf(v),
	\end{split}
	\end{equation}
where \(u\) denotes the density of the organisms and \(v\) stands for the concentration of nutrients, such as oxygen.

When \(D\), \(S\), and \(f\) are positive and appropriately chosen, it is well-known that the Neumann problem for \eqref{eqn_consumption} possesses an energy dissipation structure, arising from the interplay between the consumption term \(-uf(v)\) and the chemotaxis term \(-\nabla\cdot(uS(u,v)\nabla v)\), of the form in its simplest prototype case where \(D(u)=1\), \(S(u,v)=1\), and \(f(v)=v\):\begin{equation}\label{energy_inq}
\frac{d}{dt}\left\{\int_\Omega u\ln u +2\int_\Omega |\nabla\sqrt{v}|^2\right\}+\int_\Omega\frac{|\nabla u|^2}{u}+\int_\Omega v|\nabla^2\ln v|^2  \le 0.
\end{equation}
The energy dissipation structure \eqref{energy_inq} has played a crucial role in demonstrating global-in-time boundedness, stability, and asymptotic behaviors of solutions as well as global solvability (c.f. \cite{lankeit_depleting_2023}). 

Specifically, beyond the small initial data case where the global existence and asymptotic behaviors converging to a constant steady state have been proved using weighted \(L^p\)-estimates \cite{tao_boundedness_2011, zhang_stabilization_2015}, global large-data classical and weak solutions have been achieved via exploiting \eqref{energy_inq} in the two and three dimensions, respectively \cite{winkler_global_2012} (see \cite{jiang_eventual_2019,tao_eventual_2012} for eventual smoothness and asymptotic behaviors of such weak solutions in three dimensions).

Moreover, variants of the dissipation structure \eqref{energy_inq}, capable of allowing singular behaviors of \(S(v)=\frac{1}{v^\alpha}\), have been employed to see the global solvability and asymptotic behaviors of solutions. 
Indeed, the global existence of the classical solution has been proved for general \(f\) and \(S\) involving \(f(v)=v\) and \(S(v)=\frac{1}{v^\alpha}\) with \(0<\alpha<-\frac{1}{2}+\frac{1}{2}\left(\frac{8\sqrt{2}+11}{7}\right)^\frac{1}{2}\cong 0.3927\) for \(n=2\) \cite{ahn_global_2021}. Later, this result has been improved to \(0<\alpha<-\frac{1}{2}+\frac{1}{2}\sqrt{\frac{n+8}{n}}\) for \(n=2, 3\) \cite{kim_global_2023}. 
For \(0<\alpha<1\) and \(n=2\), weak solutions have been constructed when the initial data is sufficiently small \cite{viglialoro_global_2019}.
 In the case of \(\alpha=1\), another dissipation structure that provides weak regularity than \eqref{energy_inq} has been employed to prove global solvability of generalized solutions for \(n=2\) \cite{winkler_two-dimensional_2016} (c.f. \cite{lankeit_global_2020}), and of renormalized solutions for \(n\ge2\) \cite{winkler_renormalized_2018}. 

Furthermore, similar quantitative results have been considered when \(D\) is of porous medium type, \(u^{m-1}\), or its nondegenerate form, \((u+1)^{m-1}\) with \(m>1\), by using this enhancing diffusivity or  variants of \eqref{energy_inq}. 
Exploiting the diffusivity of \(D\), in presence of general \(S\ge 0\) and \(f>0\), the existence of global bounded solutions has been shown for \(m>1\) and \(n=2\) in a weak sense \cite{tao_global_2012}, and for \(m>2-\frac{2}{n}\) and \(n\ge2\) in the classical sense \cite{wang_boundedness_2014}.
In the case of \(S=1\), the global solvability has been shown for \(m>2-\frac{6}{n+4}\) and \(n\ge3\) using a variant of \eqref{energy_inq} \cite{wang_global_2015}. Later, this result has been improved by \cite{fan_global_2017}, where the global well-posedness and asymptotic behaviors are established for \(m>\frac{3}{2}-\frac{1}{n}\) and \(n\ge3\) \cite{fan_global_2017}. In the case of \(S(v)=\frac{1}{v}\), the global solvability has been proven for \(m>1+\frac{4}{n}\) and \(n\ge2\) \cite{lankeit_locally_2017}, and for \(m>\frac{3}{2}-\frac{1}{n}\) and \(n\ge2\) in a generalized concept \cite{yan_global_2018}.  
The latter has been extended to the tensor-valued \(S(u,v,x)\) satisfying \(|S(u,v,x)|\le \frac{1}{v^\alpha}\) with \(\alpha\in[0,1)\) for \(m> \frac{3}{2}-\frac{1}{n}\)  \cite{winkler_approaching_2022}.

For parabolic-elliptic simplification of \eqref{eqn_consumption} with \(D=1\), \(S=1\), and \(f(v)=v\), the global well-posedness and asymptotic behavior have been shown under Robin boundary condition by Fuest-Lankeit-Mizukami \cite{fuest_long-term_2021} when \(n\ge1\) (see also \cite{ahn_regular_2023}). We refer to \cite{yang_long_2024} for its Dirichlet counterpart.

Remarkably, the parabolic-elliptic counterpart of \eqref{eqn_consumption} may exhibit blow-up behavior when $S$ is of repulsion type, $S<0$. 
  Indeed, under No-flux/Dirichlet boundary conditions, finite-time blow-up solutions have been found for \(D(u)\le(u+1)^{m-1}\), \(0<m<1\), and \(S(v)=-\frac{1}{v}\) by Wang-Winkler \cite{wang_finite-time_2023} when \(n\ge2\). This result has also been verified for \(S=-1\) and \(n=2\) \cite{Ahn:2023aa}. Additionally, in the latter case, the global existence of bounded solutions has been established when \(D\) has a positive lower bound. However, to the best of our knowledge, there are no results concerning the existence or blow-up of solutions for \(S=-1\) and \(n\ge3\).
\,\\

Our main objective in this paper is to either establish the global existence of bounded solutions or construct unbounded solutions for the repulsion-type chemotaxis-consumption system in dimensions three and higher.
 To be more precise, we consider the following initial-boundary value problem
\begin{equation}\label{main}
\begin{cases}
	\partial_t u = \nabla\cdot(D(u)\nabla u) + \nabla\cdot(u\nabla v),&x\in\Omega,\,t>0,\\
	\quad 0= \Delta v -uv,&x\in\Omega,\,t>0,\\
	u(\cdot,t)\rvert_{t=0}=u_0,&x\in\Omega,\\
	\nu\cdot(D(u)\nabla u+ u\nabla v)=0\quad\text{and}\quad v=M,&x\in \partial\Omega,\,t>0,
\end{cases}
\end{equation}
In our analysis, the spatial domain \(\Omega\) is specifically chosen as \(B_R\), a ball of radius \(R>0\) centered at the origin in \(\mathbb{R}^n\). The diffusion coefficient \(D\) is chosen to extend the prototype choice as follows: 
\[D(\xi)=(\xi+1)^{m-1}\quad\text{for} \quad\xi\ge0\,\text{ and }\,m>0.\]
 Throughout this paper, the initial function \(u_0\) is assumed to satisfy 
\begin{equation}\label{assume_initial}
0 \not\equiv u_0 \in W^{1,\infty}(\Omega), \quad u_0 \geq 0, \quad \text{and is radially symmetric}.
\end{equation}
	
Before presenting the main results, we begin by outlining the local existence and blow-up criteria for \eqref{main}. Although these topics have been widely explored in the literature for more generalized frameworks \cite{cieslak_finite-time_2008}, this paper specifically addresses our particular case.

\begin{proposition}\label{prop_local}
Let \(n\ge3\), \(\Omega=B_R\subset\mathbb{R}^n\) with \(R>0\), and \(M>0\). Assume that 
\begin{equation}\label{assume_D}
D \in C^2([0, \infty)) \text{ fulfills } D(\xi) > 0 \text{ for all } \xi \ge 0,
\end{equation}
and that the initial function \(u_0\) satisfies \eqref{assume_initial}.
Then, there exists a maximal time \(T_{max}\in(0,\infty]\) for which the problem \eqref{main} possesses a unique classical solution \((u,v)\) in \(\Omega\times(0,T_{max})\), which is radially symmetric and positive over \(\overline\Omega\times(0,T_{max})\), with the following regularity properties:
	\begin{equation*}
	\begin{cases}
		u\in \bigcup_{q>n}C([0,T_{max}) ;W^{1,q}(\Omega))\cap C^{2,1}(\overline\Omega\times(0,T_{max})),\\
		v\in C^{2,0}(\overline\Omega\times(0,T_{max})).\\
	\end{cases}
\end{equation*}
Furthermore, if \(T_{max}<\infty\), then \(u\) blows up in the sense that
\begin{equation*}
\|u(\cdot,t)\|_{L^\infty(\Omega)} \rightarrow \infty \quad \textrm{as } t \rightarrow T_{max}.
\end{equation*}
Additionally, the mass of \(u\) is conserved over time:
\begin{equation}\label{prop_u}
\int_\Omega u(x,t) dx=\int_\Omega u_0(x) dx\quad\text{for all }t\in(0,T_{max}),
\end{equation}
and \(v\) is uniformly bounded in time:
\begin{equation}\label{prop_v}
\|v(\cdot,t)\|_{L^\infty(\Omega)}\le M\quad\text{for all }t\in(0,T_{max}).
\end{equation}
\end{proposition}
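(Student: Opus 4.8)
\emph{Plan of proof.} I would establish Proposition~\ref{prop_local} by combining a Banach fixed point argument with elliptic and parabolic regularity theory; the only feature beyond the classical Neumann chemotaxis setting is the inhomogeneous Dirichlet condition on $v$, which enters the $u$-equation through the drift $u\nabla v$. The first step is to resolve the elliptic part: for a fixed nonnegative $u$, writing $v=M+w$ turns $\Delta v-uv=0$ with $v|_{\partial\Omega}=M$ into $-\Delta w+uw=-Mu$ with $w|_{\partial\Omega}=0$; the associated bilinear form is coercive on $H^1_0(\Omega)$ for every $u\ge 0$ by Poincaré's inequality, so there is a unique weak solution $v=v[u]$, and elliptic $L^p$ and Schauder estimates give $v[u]\in W^{2,q}(\Omega)\hookrightarrow C^{1,\alpha}(\overline\Omega)$ whenever $u\in W^{1,q}(\Omega)$ with $q>n$. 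Applying the maximum principle to $-\Delta v+uv=0$ with $u\ge 0$ and $v|_{\partial\Omega}=M>0$ yields $0\le v\le M$, which is \eqref{prop_v}, and the strong maximum principle gives $v>0$ in $\overline\Omega$; subtracting two such problems shows that $u\mapsto v[u]$ is Lipschitz from bounded subsets of $C([0,T];W^{1,q}(\Omega))$ into $C([0,T];W^{2,q}(\Omega))$.

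Inserting $v=v[u]$ into the first equation of \eqref{main}, I would treat
\[
\partial_t u=\nabla\cdot\!\big(D(u)\nabla u\big)+\nabla\cdot\!\big(u\nabla v[u]\big),\qquad \nu\cdot\big(D(u)\nabla u+u\nabla v[u]\big)=0\ \text{on}\ \partial\Omega,
\]
as a uniformly parabolic quasilinear problem (recall $D>0$). Linearizing and using the smoothing estimates for the corresponding linear parabolic problem with conormal boundary condition -- or, alternatively, invoking general quasilinear parabolic theory -- together with the bound on $\nabla v[u]$ from the previous step, a contraction argument in a closed ball of $C([0,T];W^{1,q}(\Omega))$ for sufficiently small $T$ produces a unique local solution $u$, hence a local classical solution $(u,v)$ of \eqref{main}; uniqueness in the full class of classical solutions then follows from a routine Gronwall argument on the difference of two solutions, and the solution is radially symmetric because \eqref{main} is rotation-invariant on $B_R$. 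Iterated parabolic interior Schauder estimates give the asserted regularity $u\in C^{2,1}$ and $v\in C^{2,0}$ on $\overline\Omega\times(0,T_{max})$, and positivity of $u$ follows by comparison with the zero subsolution together with the strong maximum principle and Hopf's lemma (note $u\not\equiv 0$ since $u_0\not\equiv 0$). Continuing the solution up to a maximal time $T_{max}$ is standard.

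It remains to verify the quantitative statements. Integrating the first equation of \eqref{main} over $\Omega$ and using the no-flux condition $\nu\cdot(D(u)\nabla u+u\nabla v)=0$ gives $\frac{d}{dt}\int_\Omega u=0$, which is \eqref{prop_u}. For the blow-up criterion I would argue by contradiction: if $T_{max}<\infty$ while $\|u(\cdot,t)\|_{L^\infty(\Omega)}$ remained bounded as $t\to T_{max}$, then $v[u]$ and $\nabla v[u]$ would be bounded on $\Omega\times(0,T_{max})$ by the first step, and a Duhamel representation of $u$ together with the $L^p$--$W^{1,q}$ smoothing estimates of the relevant semigroup and the $L^\infty$ bound on $u$ would yield a uniform bound on $\|u(\cdot,t)\|_{W^{1,q}(\Omega)}$ up to $T_{max}$, contradicting maximality; hence $\|u(\cdot,t)\|_{L^\infty(\Omega)}\to\infty$ as $t\to T_{max}$.

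\emph{Main obstacle.} The genuinely delicate point, which distinguishes this from the textbook Neumann--Neumann case, is that $\partial_\nu v$ does not vanish on $\partial\Omega$, so the effective boundary condition for $u$ is the nonhomogeneous conormal condition $D(u)\partial_\nu u=-u\,\partial_\nu v$; the boundary contributions must therefore be tracked carefully both in the fixed-point estimates and in the passage from an $L^\infty$ bound to a $W^{1,q}$ bound, using that $\nabla v[u]$ has Hölder-continuous traces on $\partial\Omega$ by elliptic regularity. Within the radial class one may alternatively reduce \eqref{main} to a one-dimensional problem on $(0,R)$ and use one-dimensional parabolic theory. The remaining arguments are routine adaptations of known local well-posedness techniques for chemotaxis systems such as \cite{cieslak_finite-time_2008}.
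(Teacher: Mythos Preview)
The paper does not actually supply a proof of Proposition~\ref{prop_local}; it states the result and defers to the literature, citing \cite{cieslak_finite-time_2008} for the general framework. Your outline---solving the Dirichlet problem for $v$ via $v=M+w$ with elliptic regularity and the maximum principle to obtain $0<v\le M$, then running a contraction in $C([0,T];W^{1,q}(\Omega))$ for the quasilinear parabolic $u$-equation, and deriving mass conservation from the no-flux condition and the blow-up alternative from a continuation argument---is exactly the standard route and is consistent with what the paper invokes by reference. Your identification of the nonhomogeneous conormal condition $D(u)\,\partial_\nu u=-u\,\partial_\nu v$ on $\partial\Omega$ as the only genuinely nonstandard point is accurate; note also that in the radial setting $\partial_\nu v=\partial_s v\ge 0$ on $\partial B_R$ (cf.\ Lemma~\ref{lem1a}), which gives the boundary contribution a favorable sign in the $L^p$ estimates and simplifies both the fixed-point bounds and the comparison argument for positivity of $u$.
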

The first objective of this study is to investigate conditions on the diffusion rate that ensure the existence of globally bounded regular solutions. The following theorem demonstrates that, within this radially symmetric setting, the existence of a global-in-time bounded solution is assured for certain values of \(m\), specifically \(m>1\) or \(m=1\) with a constraint on \(M\).
	\begin{thm}\label{thm1}
		Let \(n\ge3\), \(\Omega=B_R\subset\mathbb{R}^n\) with \(R>0\), \(M>0\), and \(k_D>0\). Assume that \(D\) satisfies \eqref{assume_D} and  \begin{equation}\label{assume_D1}
		 D(\xi)\ge k_D(1+\xi)^{m-1}\quad\text{ for all }\,\xi>0,
		 \end{equation}
 and that either 
		 \[m>1,\quad \text{ or }\quad 
		 m=1\,\text{ and }\, M<\frac{2}{n-2}k_D\] 
		holds.
		 Then, for any choice of the initial function satisfying \eqref{assume_initial}, the problem \eqref{main} admits a classical radial solution \((u,v)\) in \(\Omega\times(0,\infty)\) which is uniformly bounded in time in a way that
		 \begin{equation*}
		 \sup_{t\ge0}\norm{u(\cdot,t)}_{L^\infty(\Omega)}\le C	 
		 \end{equation*}
with some \(C>0\). 
		\end{thm}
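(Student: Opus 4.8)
The plan is to derive an a priori bound of the form $\sup_{t\ge 0}\|u(\cdot,t)\|_{L^\infty(\Omega)}\le C$ by first obtaining uniform $L^p$-bounds for all large $p$ and then bootstrapping, exploiting that the elliptic equation $0=\Delta v-uv$ together with $v=M$ on $\partial\Omega$ and the maximum principle \eqref{prop_v} gives us pointwise control $0<v\le M$ and, crucially, good integrability of $uv=\Delta v$. The starting point is the standard testing of the first equation in \eqref{main} by $(1+u)^{p-1}$, which yields
\begin{equation*}
\frac{1}{p}\frac{d}{dt}\int_\Omega (1+u)^p + (p-1)\int_\Omega D(u)(1+u)^{p-2}|\nabla u|^2 = (p-1)\int_\Omega u(1+u)^{p-2}\nabla u\cdot\nabla v.
\end{equation*}
Using \eqref{assume_D1} to bound the dissipation from below by $k_D(p-1)\int_\Omega(1+u)^{m+p-3}|\nabla u|^2$, and rewriting the right-hand side after an integration by parts as $-\int_\Omega \nabla\big(\tfrac{u^{?}}{\cdot}\big)\Delta v$-type term, the key structural advantage of the \emph{repulsive} sign appears: the cross term, when $v$ is eliminated via $\Delta v = uv$, contributes with a favorable sign in part. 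Concretely, writing $\zeta(u)=\int_0^u s(1+s)^{p-2}\,ds$ so that $u(1+u)^{p-2}\nabla u=\nabla\zeta(u)$, the right-hand side becomes $(p-1)\int_\Omega\nabla\zeta(u)\cdot\nabla v = -(p-1)\int_\Omega \zeta(u)\,\Delta v + (p-1)\int_{\partial\Omega}\zeta(u)\,\partial_\nu v = -(p-1)\int_\Omega \zeta(u)\,uv + (\text{boundary})$, and since $\zeta(u)\ge 0$, $u\ge 0$, $v\ge 0$, the bulk contribution $-(p-1)\int_\Omega \zeta(u)uv$ is nonpositive. The boundary term must be controlled using the no-flux condition $\partial_\nu v = \partial_\nu v$ and the sign of $\partial_\nu v$ on $\partial\Omega$ (here radial symmetry and $v\le M=v|_{\partial\Omega}$ force $\partial_\nu v\ge 0$ on $\partial\Omega$, while $\zeta(u)\ge0$, so this term has the \emph{wrong} sign and is the first place care is needed).

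Next I would handle that boundary term and close the $L^p$ estimate. In the radial coordinate $r=|x|$, one has $v_r(R,t)\ge 0$ and in fact $v_r(R,t)=\tfrac{1}{R^{n-1}}\int_0^R r^{n-1}u(r,t)v(r,t)\,dr\le \tfrac{M}{R^{n-1}}\int_\Omega u$, so by \eqref{prop_u} this normal derivative is bounded by a constant depending only on the data. The boundary integral is then estimated by a trace inequality: $\int_{\partial\Omega}\zeta(u)\le \varepsilon\int_\Omega(1+u)^{m+p-3}|\nabla u|^2 + C_\varepsilon(\int_\Omega(1+u)^q)$ for suitable $q<m+p-1$, absorbing the gradient piece into the dissipation. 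Combined with the nonpositive bulk term, this gives
\begin{equation*}
\frac{d}{dt}\int_\Omega(1+u)^p + \frac{k_D(p-1)}{2}\int_\Omega(1+u)^{m+p-3}|\nabla u|^2 \le C_p\Big(1+\int_\Omega(1+u)^q\Big).
\end{equation*}
Rewriting the dissipation as $c_p\int_\Omega|\nabla (1+u)^{(m+p-1)/2}|^2$ and invoking the Gagliardo–Nirenberg inequality together with the mass conservation \eqref{prop_u} (which anchors the low-order norm $\|(1+u)^{(m+p-1)/2}\|_{L^{2/(m+p-1)}}$), one obtains a differential inequality $y'(t)+c\,y(t)^{1+\sigma}\le C$ for $y(t)=\int_\Omega(1+u)^p$ with $\sigma>0$, provided the exponents work out; this is exactly where the hypothesis $m>1$ (or $m=1$ with the smallness $M<\tfrac{2}{n-2}k_D$) enters, since for $m>1$ the enhanced diffusion makes the Gagliardo–Nirenberg interpolation exponent subcritical for every $p$, while for $m=1$ one needs the smallness of $M$ to beat the borderline constant (the Hardy/Sobolev-type constant $\tfrac{(n-2)}{2}$ appearing when $v\le M$ is used). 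A standard ODE comparison then yields $\sup_{t\ge0}\int_\Omega(1+u)^p\le C_p$ for all $p<\infty$.

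Finally, I would upgrade the uniform $L^p$-bounds to a uniform $L^\infty$-bound. With $\|u(\cdot,t)\|_{L^p}$ bounded uniformly in $t$ for $p$ large, elliptic regularity for $0=\Delta v-uv$, $v|_{\partial\Omega}=M$, gives $\|v(\cdot,t)\|_{W^{2,p}}\le C$ hence $\|\nabla v(\cdot,t)\|_{L^\infty}\le C$ uniformly in time. Then the first equation is a scalar parabolic equation $\partial_t u=\nabla\cdot(D(u)\nabla u)+\nabla\cdot(u\nabla v)$ with a drift in $L^\infty$ and porous-medium-type (but nondegenerate, by \eqref{assume_D}) diffusion; a Moser iteration or the variation-of-constants representation with the Neumann heat semigroup, using the uniform-in-time $L^p$ bounds as input, produces $\sup_{t\ge 0}\|u(\cdot,t)\|_{L^\infty(\Omega)}\le C$. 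By the blow-up criterion in Proposition~\ref{prop_local} this bound forces $T_{max}=\infty$, completing the proof.

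The main obstacle I anticipate is the borderline case $m=1$: there the $L^p$-testing produces a term of the form $-(p-1)\int_\Omega\zeta(u)uv$ that must be played off against the diffusion and the Sobolev constant, and the constant $\tfrac{2}{n-2}$ in the hypothesis strongly suggests that one must instead test with a singular weight or use the substitution reducing $0=\Delta v-uv$ to an estimate involving $\int_\Omega u^{?}v^{-1}|\nabla v|^2$ and the sharp Hardy inequality constant $\big(\tfrac{n-2}{2}\big)^2$; getting the smallness threshold exactly $M<\tfrac{2}{n-2}k_D$ rather than something weaker is the delicate point, and it is likely that the actual argument tests the $u$-equation against a power of $v$ (e.g. $u\,v^{-\lambda}$-type quantities) to exploit the consumption structure, rather than the plain $(1+u)^{p-1}$ I sketched above.
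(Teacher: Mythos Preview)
Your proposal contains a critical sign error that invalidates the core strategy. In the repulsive system \eqref{main} the drift term is $+\nabla\cdot(u\nabla v)$, so testing with $(1+u)^{p-1}$ gives
\[
\frac{1}{p}\frac{d}{dt}\int_\Omega (1+u)^p + (p-1)\int_\Omega D(u)(1+u)^{p-2}|\nabla u|^2 = -(p-1)\int_\Omega u(1+u)^{p-2}\nabla u\cdot\nabla v,
\]
with a \emph{minus} sign on the right. After your integration by parts this becomes $+(p-1)\int_\Omega \zeta(u)uv - (p-1)\int_{\partial\Omega}\zeta(u)\partial_\nu v$: the bulk term $+(p-1)\int_\Omega\zeta(u)uv$ is \emph{nonnegative} (destabilizing), while the boundary term is nonpositive (favorable). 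This is exactly the opposite of what you claim, and it is not a cosmetic issue: the bad sign of the bulk term is precisely what drives the blow-up in Theorem~\ref{thm3}. Repulsion combined with consumption and a Dirichlet condition $v=M$ pushes mass toward the origin, so there is no ``structural advantage of the repulsive sign'' in the $L^p$ energy. Your subsequent plan (absorb the boundary term, close via Gagliardo--Nirenberg) collapses because the term that actually needs absorbing is $\int_\Omega\zeta(u)uv\sim M\int_\Omega(1+u)^{p+1}$, which is one order too high to be controlled by the dissipation and mass alone.

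The paper's proof takes a completely different route, relying essentially on radial symmetry. First one obtains the pointwise bound $\partial_s v(s,t)\le (n-2)MC_*/s$ near the origin by testing the elliptic equation against a Newtonian-kernel weight (Lemma~\ref{lem1b}). For $m>1$ this already gives $\nabla v\in L^{q'}$ uniformly for every $q'<n$, and the enhanced diffusion lets a direct H\"older/Gagliardo--Nirenberg argument close the $L^p$ estimate (Lemma~\ref{lem1f}). For $m=1$ the $1/s$ bound is borderline, and the paper instead runs a comparison argument on the mass accumulation function $U(s,t)=\int_0^s\rho^{n-1}u\,d\rho$: the barrier $Z(s)=Cs^{n-\beta}$ is a supersolution of the scalar parabolic inequality satisfied by $U$ precisely when $\beta>M(n-2)/k_D$, so the hypothesis $M<\tfrac{2}{n-2}k_D$ allows $\beta<2$ (Lemma~\ref{lem1d}). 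This yields the improved decay $\partial_s v\lesssim s^{1-\beta}$ and hence $\|\nabla v\|_{L^n(B_\delta)}\le\varepsilon$ for small $\delta$ (Lemma~\ref{lem1e}), after which the $L^p$ testing closes. The constant $\tfrac{2}{n-2}$ thus comes from this ODE comparison, not from a Hardy or Sobolev constant as you conjectured.
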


A subsequent natural question is under what conditions the boundedness or unboundedness of the solution is determined for the case \(0<m<1\).  It might be expected to experts that small data implies the existence of global bounded solutions. However, we can not find it in the literature and therefore, for clarity, we first demonstrate that bounded solutions can be constructed globally in time when the Dirichlet data $M$ is sufficiently small.

	\begin{thm}\label{thm2}
	Let \(n\ge3\) and \(\Omega=B_R\subset\mathbb{R}^n\) with \(R>0\). Assume that 
	\begin{equation}\label{assume_D2}
	D(\xi)=(1+\xi)^{m-1}\quad\text{ for all }\,\xi>0\quad\text{ with }\,0<m<1.
	\end{equation}
	Then, given initial function \(u_0\) satisfying \eqref{assume_initial}, there exists \(M_*=M_*(R,u_0)>0\) such that for any choice of \(M\in(0, M_*]\), the corresponding problem \eqref{main} possesses a classical radial solution \((u,v)\) in \(\Omega\times (0,\infty)\) which is uniformly bounded in time in a way that
		 \begin{equation*}
		 \sup_{t\ge0}\norm{u(\cdot,t)}_{L^\infty(\Omega)}\le C	 
		 \end{equation*}
with some \(C=C(M)>0\).
	\end{thm}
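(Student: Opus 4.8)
The plan is to reduce the slow‑diffusion case $0<m<1$ to the nondegenerate case $m=1$ already settled in Theorem~\ref{thm1}, letting the smallness of $M$ play the role of an enhanced diffusivity. Two elementary observations drive this. First, since $m<1$ the map $\xi\mapsto(1+\xi)^{m-1}$ is decreasing, so on any bounded density range $0\le\xi\le K$ one has the uniform ellipticity $D(\xi)\ge(1+K)^{m-1}>0$. Second, the $m=1$ problem is scaling‑covariant: if $(u,v)$ solves \eqref{main} with $D\equiv k_D$ constant and Dirichlet level $M$, then $\hat v:=v/M$ satisfies $0=\Delta\hat v-u\hat v$ with $\hat v|_{\partial\Omega}=1$ (so $\hat v$ does not see $M$), and after the time change $\tau=k_Dt$ the density solves $\partial_\tau u=\Delta u+\lambda\nabla\cdot(u\nabla\hat v)$ with $\lambda:=M/k_D$, the same no‑flux condition and the same initial datum; the time change leaves $\sup$‑in‑time $L^\infty$ norms untouched. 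Hence the constant in Theorem~\ref{thm1} for $m=1$ can be written in the form $C_0(\lambda,R,u_0)$, finite whenever $\lambda<\frac{2}{n-2}$, and I would record (from the proof of Theorem~\ref{thm1}) that it stays bounded for $\lambda\in(0,\tfrac1{n-2}]$, say by some $\Lambda=\Lambda(R,u_0)\ge\norm{u_0}_{L^\infty(\Omega)}$.

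Given $\Lambda$, I would set $K:=2\Lambda$, $k_D:=(1+K)^{m-1}$, and $M_*:=\frac{1}{n-2}(1+K)^{m-1}$, and introduce a modified coefficient $\tilde D\in C^2([0,\infty))$ that agrees with $D$ on $[0,K/2]$, equals the constant $(1+K)^{m-1}$ on $[K,\infty)$, and bridges between them on $[K/2,K]$ staying above $(1+K)^{m-1}$ (a routine $C^2$ construction, possible because $D$ is decreasing so $D(K/2)>(1+K)^{m-1}$ leaves room). Then $\tilde D$ fulfills \eqref{assume_D}, and $\tilde D(\xi)\ge(1+K)^{m-1}$ for all $\xi\ge0$, i.e.\ it satisfies \eqref{assume_D1} with exponent $1$ and constant $k_D$. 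For any $M\in(0,M_*]$ one has $\lambda=M/k_D\le\frac1{n-2}<\frac2{n-2}$, so Theorem~\ref{thm1} applies to \eqref{main} with $D$ replaced by $\tilde D$ and yields a global, bounded, radial classical solution $(\tilde u,\tilde v)$ with $\sup_{t\ge0}\norm{\tilde u(\cdot,t)}_{L^\infty(\Omega)}\le C_0(\lambda,R,u_0)\le\Lambda=K/2$.

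The final step is a self‑consistency argument: since $0\le\tilde u\le K/2$ on $\overline\Omega\times[0,\infty)$ and $\tilde D\equiv D$ on $[0,K/2]$ (and $\norm{u_0}_{L^\infty(\Omega)}\le\Lambda=K/2$ makes this hold already at $t=0$), we have $\tilde D(\tilde u)=D(\tilde u)$ throughout, so $(\tilde u,\tilde v)$ is in fact a global classical radial solution of the \emph{original} problem \eqref{main}. Restricting it to the maximal existence interval of the solution furnished by Proposition~\ref{prop_local} and using uniqueness there forces the two to coincide; since $\tilde u$ does not blow up, the blow‑up criterion in Proposition~\ref{prop_local} gives $T_{max}=\infty$, and therefore $\sup_{t\ge0}\norm{u(\cdot,t)}_{L^\infty(\Omega)}\le K/2=:C(M)$, which is the assertion.

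The point that requires the most care is extracting from the proof of Theorem~\ref{thm1} that its $m=1$ boundedness constant is genuinely scale‑invariant, of the form $C_0(M/k_D,R,u_0)$ with no residual separate dependence on $M$ or $k_D$, and bounded when $M/k_D$ ranges over a fixed subinterval of $(0,\tfrac{2}{n-2})$; the rest is bookkeeping. If one prefers a self‑contained argument avoiding Theorem~\ref{thm1}, the natural alternative is a direct $L^p$ energy estimate: testing the $u$‑equation in \eqref{main} with $(1+u)^{p-1}$, using the no‑flux condition and the sign $\nu\cdot\nabla v\ge0$ on $\partial\Omega$ (valid since $v$ is subharmonic and strictly below $M$ in the interior), one reaches
\begin{equation*}
\frac{d}{dt}\int_\Omega(1+u)^p\le -c_1\,\norm{\nabla(1+u)^{\frac{p+m-1}{2}}}_{L^2(\Omega)}^2+c_2\,M\int_\Omega(1+u)^{p+1}.
\end{equation*}
Here the genuine obstacle — and the reason the smallness of $M$ is indispensable — is that for $m<1$ a Gagliardo--Nirenberg interpolation does not allow the last term to be absorbed into the dissipation for any $p$ (the relevant interpolation exponent exceeds $2$); one must then additionally exploit the self‑limiting depletion of $v$, namely that $\Delta v=uv$ together with radial symmetry forces $v$ to be very small wherever $u$ concentrates, and combine this with $M$ small through a continuity/bootstrap argument — precisely the mechanism the reduction above packages more cheaply.
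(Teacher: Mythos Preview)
Your approach is genuinely different from the paper's. The paper proves Theorem~\ref{thm2} by direct $L^q$ estimates: Lemma~\ref{lem2a} establishes a barrier bound $U(s,t)\le\eta s^{n-\gamma}$ for the mass accumulation function via a nonlinear comparison that uses the exact form $D(\xi)=(1+\xi)^{m-1}$ and the smallness of $M$; Lemma~\ref{lem2b} supplies a radial Hardy inequality; Lemma~\ref{lem2c} combines these with a one-dimensional Gagliardo--Nirenberg inequality to obtain a time-uniform $L^q$ bound for a specific large $q$; the final step bootstraps to $L^\infty$. You instead truncate $D$ beyond a level $K$, invoke Theorem~\ref{thm1} for the modified coefficient, and close by self-consistency and uniqueness. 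Conceptually this is cleaner --- it makes transparent that smallness of $M$ serves to restore effective uniform ellipticity --- and once it works it actually yields a bound independent of $M$, slightly stronger than stated.

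The one point you flag as ``requiring the most care'' is not quite bookkeeping, though. The scale invariance of the Theorem~\ref{thm1} constant is genuine: the rescaling $(v,t)\mapsto(v/k_D,k_Dt)$ sends the problem with $D\ge k_D$ and level $M$ to one with diffusion $\ge 1$ and level $\lambda=M/k_D$, and the proof of Theorem~\ref{thm1} uses $D$ only through its lower bound. But you also need the resulting bound $\Lambda(\lambda,R,u_0)$ to be uniform over $\lambda\in(0,\tfrac{1}{n-2}]$, including $\lambda\to 0$, since your $K$ is fixed once and must dominate the solution for \emph{every} $M\le M_*$. The constants \emph{as written} in the paper's proof of Theorem~\ref{thm1} do not give this directly: the third term defining $\delta_1$ in Lemma~\ref{lem1d} tends to zero with $\lambda$, and feeding this through Lemmas~\ref{lem1e}--\ref{lem1f} makes $C_2$ degenerate. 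This is repairable (drop that third term from $\delta_1$ --- it is only a convenience for Lemma~\ref{lem1e} --- or treat small $\lambda$ by a separate, easy perturbation argument), so your strategy is sound; but the verification does require reopening the proof of Theorem~\ref{thm1} rather than citing it. The paper's direct route avoids this dependency entirely.
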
	
	
Finally, we establish that when \(M\) is sufficiently large, blow-up of the solution may occur, in particular, in a more restricted range \(m\in(0,\frac{2}{n})\).
\begin{thm}\label{thm3}
Let \(n\ge3\), \(\Omega=B_R\subset\mathbb{R}^n\) with \(R>0\), and \(K_D>0\). Assume that 
\(D\) satisfies \eqref{assume_D} and 
\begin{equation}\label{assume_D3}
D(\xi)\le K_D(1+\xi)^{m-1}\quad\text{ for all }\,\xi>0\quad\text{ with }\,0<m<\frac{2}{n}.
\end{equation}
Then, given initial function \(u_0\) satisfying \eqref{assume_initial}, there exists \(M^*=M^*(R, u_0)>0\) such that for any choice of \(M\ge M^*\), the corresponding solution \((u,v)\) of \eqref{main} blows up in finite or infinite time; that is, \begin{equation*}
\|u(\cdot,t)\|_{L^\infty(\Omega)} \rightarrow \infty \quad \textrm{as } t \rightarrow T_{max}.
\end{equation*}
In particular, when \(n=3\), the blow-up time is finite, i.e., \(T_{max}<\infty\).
\end{thm}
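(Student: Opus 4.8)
The approach would be to argue by contradiction. Assuming the assertion fails, the solution is global and $C_\infty:=\sup_{t\ge0}\|u(\cdot,t)\|_{L^\infty(\Omega)}<\infty$ by Proposition~\ref{prop_local}; the goal is to produce a weighted moment of $u$ that is forced to become unbounded, contradicting $C_\infty<\infty$. For $n=3$ the same scheme is to be sharpened so that the moment blows up in finite time, giving $T_{max}<\infty$ directly.

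The first step is to squeeze quantitative information on $v$ out of $0=\Delta v-uv$. In radial form $(r^{n-1}v_r)_r=r^{n-1}uv\ge0$, so $v_r\ge0$ and $r\mapsto r^{n-1}v_r(r)$ is nondecreasing; integrating twice and using $0<v\le M$ together with mass conservation \eqref{prop_u} (write $m_0:=\int_\Omega u_0$) gives
\[
v(r,t)\ \ge\ M-\frac{Mm_0}{n\omega_n(n-2)}\bigl(r^{2-n}-R^{2-n}\bigr),\qquad 0<r\le R,\ t\ge0.
\]
Since the right-hand side tends to $M$ as $r\uparrow R$, there are $r_0\in(0,R)$ and $c_0>0$ depending only on $n,R,m_0$ with $v(\cdot,t)\ge c_0M$ on $\{r_0\le|x|\le R\}$ for all $t\ge0$; separately, comparison with the solution of $\Delta\underline v=C_\infty\underline v$, $\underline v|_{\partial\Omega}=M$, provides a global lower bound $v\ge\tilde c(C_\infty,R)M>0$.

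The main object is the weighted mass moment $\phi(t):=\int_\Omega|x|^{-\alpha}u(x,t)\,dx$ with $\alpha\in(0,n)$ to be fixed; $\phi(t)$ is finite, and under the contradiction hypothesis it is bounded uniformly in $t$. Using \eqref{main} and the no-flux boundary condition (the boundary term drops out),
\[
\phi'(t)=\alpha n\omega_n\int_0^R r^{\,n-2-\alpha}\bigl(D(u)u_r+u\,v_r\bigr)\,dr=:\alpha n\omega_n\,(I_1+I_2).
\]
For the chemotactic term one uses $v_r(r)=r^{1-n}\int_0^r\sigma^{n-1}uv\,d\sigma$ and the lower bounds on $v$: on $[r_0,R]$ this yields $v_r(r)\ge\frac{c_0M}{n\omega_n}r^{1-n}\bigl(w(r)-w(r_0)\bigr)$ with $w(r)=\int_{B_r}u$, and after the substitution $u\,dr=\frac{dw}{n\omega_n r^{n-1}}$ and one integration by parts, $I_2\ge cM\int_{r_0}^R r^{-\alpha-n-1}\bigl(w(r)-w(r_0)\bigr)^2\,dr$; with the global lower bound instead, the same manipulation plus Cauchy--Schwarz gives $I_2\ge c\,\tilde c(C_\infty,R)M\,\bigl(\phi(t)-R^{-\alpha}m_0\bigr)^2$. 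For the diffusion term one writes $D(u)u_r=\partial_r\Phi(u)$ with $\Phi(\xi)=\int_0^\xi(1+\sigma)^{m-1}d\sigma\le\frac1m(1+\xi)^m$, integrates by parts (the boundary term at $r=R$ has the good sign), and estimates $\int_0^R r^{\,n-3-\alpha}(1+u)^m\,dr$ by Hölder's inequality, pairing $(1+u)^m$ with a power of $r$ so that one factor is absorbed by $m_0$ and the other is integrable near the origin; the requirement that these exponents be simultaneously admissible, in tandem with the admissible range of $\alpha$, is where the restriction $0<m<\frac2n$ enters. Assembling the pieces yields a differential inequality of the form $\phi'(t)\ge c\,M\,\Psi(t)-C$ with $c,C$ depending only on $n,R,m_0,\alpha$ and $\Psi(t)$ bounded below by an at least quadratic expression in $\phi(t)$ (and the accumulated mass), so that, since $\phi(0)>R^{-\alpha}m_0$ strictly by \eqref{assume_initial}, choosing $M\ge M^*$ large makes the right-hand side eventually dominant and $\phi$ unbounded --- the desired contradiction. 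When $n=3$ one arranges $\Psi$ to be genuinely superlinear, $\phi'\ge c\phi^{1+\varepsilon}-C$, so that $\phi$, hence $\|u\|_{L^\infty(\Omega)}$, becomes infinite in finite time.

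The hard part is the coupling $uv$ in the second equation of \eqref{main}: in contrast with the classical Keller--Segel setting one cannot eliminate $v$ from the drift in terms of $w$, and --- because $v$ genuinely degenerates near the origin precisely when $u$ starts to concentrate --- the time-uniform lower bound for $I_2$ with the $C_\infty$-free constant $c_0M$ only controls the \emph{outer} mass $w(\cdot)-w(r_0)$, which could in principle be swallowed by the core, whereas the cleanly quadratic-in-$\phi$ lower bound comes with the $C_\infty$-dependent constant $\tilde c(C_\infty,R)M$. Reconciling these --- for instance via a dichotomy (either $\phi$ already blows up, or enough mass stays in a fixed shell so that $\Psi$ is quantitatively positive), or via a monotonicity property of $\phi$ --- together with making the Hölder/interpolation control of $I_1$ tight enough to reach the threshold $m<\frac2n$, is the delicate core of the proof; it is also the reason that for $n\ge4$ only blow-up in finite or infinite time can be claimed. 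An alternative would be to feed the same lower bounds on the chemotactic flux into a subsolution comparison for the mass accumulation function $w(s,t)$, $s=r^n$, and to construct an explicit blowing-up subsolution, trading the interpolation bookkeeping for the construction of a barrier.
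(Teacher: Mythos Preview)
Your outline has a real gap at exactly the point you flag. Both branches of your $I_2$-estimate fail to close the argument: the global lower bound $v\ge\tilde c(C_\infty,R)M$ makes the constant in front of $(\phi-R^{-\alpha}m_0)^2$ depend on $C_\infty$, so the threshold $M^*$ you would extract depends on the solution itself and the contradiction is circular; the $C_\infty$-free shell estimate only sees $w(\cdot)-w(r_0)$, which can be driven to zero precisely when mass concentrates at the origin. You acknowledge this tension but do not resolve it, and the suggested ``dichotomy'' or barrier construction is not an argument.

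The paper's proof avoids any lower bound on $v$ altogether. The key identity is that the elliptic equation gives $\partial_s U=\partial_s(s^{n-1}\partial_s v)/v\ge \partial_s(s^{n-1}\partial_s\ln v)$; substituting this into the drift term and integrating by parts \emph{once more using the equation} yields, for the moment $\phi(t)=\int_0^R s^{-\alpha}U(s,t)\,ds$ with $\alpha\in(n-3,\,n(1-m)-1)$,
\[
\phi'(t)\ \ge\ -C_1L^m\ +\ \frac{C_2}{M}\,(M-v(0,t))^2,
\]
with $C_1,C_2$ depending only on $n,R,m,\alpha,K_D$. A separate ODE comparison for $(\ln v)_s$ (Lemma~\ref{lem3a}) gives $v(0,t)\le M/(1+\psi(t))$ with $\psi(t)=\int_0^R\rho^{1-n}U(\rho,t)\,d\rho\ge R^{\alpha+1-n}\phi(t)$, so the second term becomes $C_2M\,\psi^2/(1+\psi)^2$. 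Everything is $C_\infty$-free, and taking $M\ge M^*(u_0)$ forces $\phi'\ge C_1L^m>0$, i.e.\ \emph{linear} growth of $\phi$; since $\phi(t)\le R^{n-\alpha+1}\|u(\cdot,t)\|_\infty$, blow-up (finite or infinite time) follows. The constraint $m<2/n$ is exactly the nonemptiness of $(n-3,\,n(1-m)-1)$: the upper endpoint comes from your H\"older step on $I_1$, but the lower endpoint $\alpha>n-3$ comes from a Cauchy--Schwarz step bounding $(M-v(0,t))^2$ by $\int_0^R s^{n-\alpha-2}v_s^2\,ds$, which your scheme never reaches.

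Your $n=3$ mechanism is also off. The paper does \emph{not} obtain a superlinear ODI; it keeps the linear growth $\phi'\ge c>0$ and observes that, because for $n=3$ one may take $\alpha<1$, the moment $\phi(t)=\int_0^R s^{-\alpha}U\,ds\le \frac{L}{1-\alpha}R^{1-\alpha}$ is \emph{a priori} bounded independently of $\|u\|_\infty$. Linear growth of a bounded quantity forces $T_{max}<\infty$. Your moment $\int_\Omega|x|^{-\alpha}u$ does not admit such an $\|u\|_\infty$-free bound for any $\alpha>0$, so the finite-time conclusion would not follow from your setup even if the ODI were available.
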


\begin{rmk}
We remind that in two dimensions, blow-up solutions have been constructed for any \(m\in(0,1)\) \cite{Ahn:2023aa}. We may also expect that the blow-up could occur in the range \(m\in[\frac{2}{n},1)\) for \(n\ge3\) as well. We leave it as an open question.
\end{rmk}

{\textbf{Plan of the paper}}\quad  Theorem~\ref{thm1} extends the result in \cite{Ahn:2023aa} to higher dimensions. Whereas the authors in \cite{Ahn:2023aa} exploited the uniform \(L^2\) smallness of \(\nabla v\) near the origin to ensure uniform \(L^1\) boundedness of \(u\ln u\), thereby achieving global boundedness of \(u\) 
 for \(n=2\), our proof begins with a pointwise estimate of \(\nabla v\) near the origin (Lemma~\ref{lem1b}). This estimate is derived from the uniform boundedness of \(v\) and the assumption of radial symmetry. Employing this approach, we can control the mass accumulation function \(U\) near the origin through a comparison method for the equation of \(U\) (Lemma~\ref{lem1d}). This control leads to an improved regularity of \(\nabla v\) (Lemma~\ref{lem1e}) near the origin, which, when combined with estimates away from the origin (Lemma~\ref{lem1a}), ensures global boundedness of \(u\) through \(L^p\)-estimates in higher dimensions (Lemma~\ref{lem1f}).

For Theorem~\ref{thm2}, we first exploit the smallness assumption on \(M\), depending on \(u_0\), to obtain the behavior of \(U\) via a comparison argument similar to that used in Lemma~\ref{lem1d} (Lemma~\ref{lem2a}).
This behavior of \(U\), in conjunction with a Hardy-type inequality (Lemma~\ref{lem2b}) and a one-dimensional variant of the Gagliardo-Nirenberg inequality, allows us to derive \(L^p\)-estimates for \(u\) (Lemma~\ref{lem2c}).

In Theorem~\ref{thm3}, the method follows the approach employed in \cite{Ahn:2023aa} (see also \cite{wang_finite-time_2023}).
This method involves tracking the time evolution of the quantity \(\phi(t):=\int_0^R s^{-\alpha} U(s,t)ds\) with appropriately chosen \(\alpha\).
 Obtaining a suitable lower bound for \(\nabla v\) is crucial in this process. The authors in \cite{Ahn:2023aa} provided this lower bound by appropriately estimating \(\nabla \ln v\) and \(v\), respectively. 
However, while the lower bound for \(\nabla \ln v\) can be efficiently estimated even in higher dimensions using a variant of the ODE comparison method (Lemma~\ref{lem3a}, and see \cite[Lemma 3.1]{wang_finite-time_2023} for the proof), demonstrating the polynomial decay of \(v\), which was straightforward in dimensions two \cite{Ahn:2023aa}, seems difficult to demonstrate for dimensions three and higher. As a result, it is unlikely to obtain \(\phi'(t)\gtrsim \phi(t)\) as in \cite{Ahn:2023aa}.
To overcome the difficulty, 
instead, we make full use of the second equation of \eqref{main} and the estimate of \(v\) at the spatial origin (Lemma~\ref{lem3b}) to obtain an appropriate ordinary differential inequality (ODI) for $\phi(t)$ (Lemma~\ref{lem3c}), which is a technical novelty compare to  \cite{Ahn:2023aa}.
It turns out that such ODI  satisfies \(\phi'(t)\gtrsim 1\), namely, $\phi(t)$ has 
a linear growth in time, which implies blow-up in finite or infinite time. On the other hand, in three dimensions, since it is shown that $\phi(t)$ is uniformly bounded, blow-up must occur in a finite time.
\\

For clarity throughout this paper, we abbreviate \(\|\cdot\|_{L^p(\Omega)}\) as $\|\cdot\|_p$ when no confusion arises. Constants are denoted by \(C_i\) with \(i=1,2,...\), where the subscript serves to distinguish these constants within each chapter. Additionally, \(C(x,y,...)\) indicates a dependency of the constant \(C\) on variables \(x,y\), and others. 
Finally, we denote by \(\sigma_n\) the measure of the \(n-1\) dimensional unit sphere \(\partial B_1\).

\section{Global boundedness for \(m\ge 1\)}
Hereafter, we fix \(n\ge3\), \(\Omega=B_R\subset \mathbb{R}^n\) with \(R>0\), and assume that \(D\) satisfies \eqref{assume_D}. we denote by \((u,v)\) the solution in \(\Omega\times(0,T_{max})\) as in Proposition~\ref{prop_local}, and denote \[L:=\frac{1}{\sigma_n}\int_{\Omega}u_0(x)dx=\int_0^R s^{n-1}u_0(s)ds.\]
Utilizing the radial symmetry, the equations in \eqref{main} can be rewritten as the following scalar equations with a slight abuse of notation \((u,v)(x,t)=(u,v)(|x|,t)\), where \(|x|=s\in(0,R)\) and \(t\in(0,T_{max})\):
\begin{equation}\label{eqn_ur}
		\partial_t u = s^{1-n}\partial_s(s^{n-1}D(u)\partial_s u) + s^{1-n}\partial_s(s^{n-1}u \partial_s v),
		\end{equation}
\begin{equation}\label{eqn_vr}
		0= s^{1-n}\partial_s(s^{n-1}\partial_s v)-uv.		
\end{equation}
Hereafter, we fix \(n\ge3\), \(\Omega=B_R\subset \mathbb{R}^n\) with \(R>0\), and assume that \(D\) satisfies \eqref{assume_D}.
\subsection{Estimates of \(\nabla v\)}
In the subsequent two lemmas, pointwise estimates for \(\nabla v\) are presented. The estimates are two-fold: (Lemma~\ref{lem1a}) away from the spatial origin and (Lemma~\ref{lem1b}) near the origin. The first estimate indicates that the origin is the only possible blow-up point for \(u\), while the second leads to Lemma~\ref{lem1d}, the behavior of \(u\) near the origin.

We first observe that \(\partial_s v\) is positive and bounded away from the origin.
\begin{lem}\label{lem1a}
Let $(u,v)$ be a solution given in Proposition~\ref{prop_local}.
It holds that \(\partial_s v\) is positive in \((0,R)\times(0,T_{max})\). Moreover, for any choice of \(\delta_0\in(0,R)\), we have
	\begin{equation*}\label{ineq_vr}
	\partial_s v(s,t)\le \delta_0^{-n+1}LM\quad\text{for all }(s,t)\in[\delta_0,R)\times(0,T_{max}).
	\end{equation*}
\end{lem}
\begin{proof}
Since \(u\) and \(v\) are positive, the positivity of  \(\partial_s v\) immediately follows from \eqref{eqn_vr}. Let us fix \(\delta_0\in(0,R)\). Through integration by parts, it holds from \eqref{eqn_vr} and \eqref{prop_v} that
\begin{align*}
	L\ge\int_0^s \rho^{n-1}u(\rho,t) d\rho
	&=\int_0^s \frac{\partial_\rho(\rho^{n-1}\partial_\rho v(\rho,t))}{v(\rho,t)} d\rho\\
	&=\int_0^s \frac{\rho^{n-1}(\partial_\rho v(\rho,t))^2}{v^2(\rho,t)}d\rho+\frac{s^{n-1}\partial_s v(s,t)}{v(s,t)}\\
	&\ge \delta_0^{n-1}\frac{\partial_s v(s,t)}{M}
\end{align*}
for all \((s,t)\in[\delta_0,R)\times(0,T_{max})\), as desired.
\end{proof}
The following lemma shows that  \(\partial_s v \lesssim 1/s\) near the origin uniformly in time. 
Our basic approach involves employing a Newtonian kernel-type test function \(\varphi(x)=|x|^{2-n}-R^{2-n}\).

\begin{lem}\label{lem1b}
Let $(u,v)$ be a solution given in Proposition~\ref{prop_local}.
For any \(C_*>1\), it follows that
\begin{equation*} 
\partial_s v(s,t)\le\frac{(n-2)MC_*}{s}\quad\text{for all }(s,t)\in(0,R(1-1/C_*)^\frac{1}{n-2}]\times(0,T_{max}).
\end{equation*}
\end{lem}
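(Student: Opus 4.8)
The plan is to test the second equation of \eqref{main} against the truncated Newtonian kernel and exploit the sign of $u$, $v$, and $\partial_s v$ together with the uniform bound $v\le M$ from \eqref{prop_v}. Concretely, fix $C_*>1$ and set $\varphi(x)=|x|^{2-n}-R^{2-n}$, which is harmonic in $B_R\setminus\{0\}$, nonnegative on $B_R$, and vanishes on $\partial B_R$. For $s\in(0,R)$ let me integrate $0=\Delta v-uv$ against $\varphi$ over the annulus $A_s:=\{s<|x|<R\}$ (or equivalently work with the radial one-dimensional identity \eqref{eqn_vr}). Green's identity on $A_s$ gives
\begin{equation*}
\int_{A_s}\varphi\,\Delta v=\int_{A_s}v\,\Delta\varphi+\int_{\partial A_s}\Bigl(\varphi\,\partial_\nu v-v\,\partial_\nu\varphi\Bigr),
\end{equation*}
and since $\Delta\varphi=0$ in $A_s$, the only contributions are the boundary terms on $\{|x|=s\}$ and $\{|x|=R\}$. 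On $\{|x|=R\}$ one has $\varphi=0$, while $\partial_\nu\varphi$ there is a fixed negative constant and $v=M$, producing an explicit (sign-definite) constant times $M$. On the inner sphere $\{|x|=s\}$, the term $\varphi(s)\,\partial_s v(s,t)$ appears with a favorable sign (it is the quantity I want to bound), while $v\,\partial_\nu\varphi$ on $\{|x|=s\}$ is controlled from above by $M$ times $|\partial_s\varphi(s)|=(n-2)s^{1-n}\sigma_n$-type factors. Finally, $\int_{A_s}\varphi\,uv\ge0$ because $u,v,\varphi\ge0$, so dropping it only helps.

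Carrying the bookkeeping through in the radial variable: from \eqref{eqn_vr}, $\partial_\rho(\rho^{n-1}\partial_\rho v)=\rho^{n-1}uv\ge0$, so $\rho^{n-1}\partial_\rho v$ is nondecreasing, hence $s^{n-1}\partial_s v(s,t)\le R^{n-1}\partial_s v(R,t)$. To bound $\partial_s v(R,t)$ I integrate \eqref{eqn_vr} once more against the weight coming from $\varphi$: multiplying $\partial_\rho(\rho^{n-1}\partial_\rho v)=\rho^{n-1}uv$ by $(\rho^{2-n}-R^{2-n})$ and integrating over $\rho\in(s,R)$, then integrating by parts, the boundary contributions at $\rho=R$ and $\rho=s$ combine with $v\le M$ to give, after letting $s\downarrow0$,
\begin{equation*}
(n-2)\int_0^R v(\rho,t)\,d\rho \;\ge\; \text{(positive multiple of) } \partial_s v(R,t)\,R\;-\;\text{const},
\end{equation*}
and crucially, near the origin $\varphi(s)=s^{2-n}-R^{2-n}\ge (1-1/C_*)s^{2-n}$ precisely when $s\le R(1-1/C_*)^{1/(n-2)}$, which is the source of the factor $C_*$. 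Combining the monotonicity of $\rho^{n-1}\partial_\rho v$ with the estimate just obtained and with $v\le M$ then yields $\partial_s v(s,t)\le (n-2)MC_*/s$ on the stated range of $s$.

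The main obstacle, and the place where care is needed, is the treatment of the inner boundary term as $s\downarrow0$: a priori one only knows $v\in C^{2,0}(\overline\Omega\times(0,T_{max}))$ and $\rho^{n-1}\partial_\rho v\to0$ as $\rho\to0$ (since $v$ is $C^1$ up to the origin by radial symmetry and smoothness), so the singular test function $\varphi$ produces terms like $s^{2-n}\cdot s^{n-1}\partial_s v(s)=s\,\partial_s v(s)\to0$ and $v(s)\cdot s^{1-n}\cdot s^{n-1}=v(s)\to v(0)$ that must be shown to be harmless or explicitly accounted for. Getting the constant exactly $(n-2)MC_*$ (rather than something with extra additive pieces) requires noticing that the $R^{2-n}$ part of $\varphi$ and the $-R^{2-n}$ subtraction conspire so that all terms not proportional to $M/s$ either cancel or have the right sign; in particular the $\int_0^R v\,d\rho\le MR$ bound must be inserted in a way that the $R$ cancels against $\partial_s v(R)\le s^{1-n}\cdot(\cdots)$, leaving the clean $1/s$ behavior. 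Once the endpoint analysis at $0$ is justified, the rest is a direct computation.
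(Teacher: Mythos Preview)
Your first paragraph already contains the paper's entire proof, but you do not recognize this and instead wander into an unnecessary and muddled argument. Once you integrate $\Delta v=uv\ge 0$ against $\varphi(\rho)=\rho^{2-n}-R^{2-n}$ over the annulus $\{s<|x|<R\}$ (equivalently, multiply $\partial_\rho(\rho^{n-1}\partial_\rho v)\ge 0$ by $\rho^{2-n}-R^{2-n}$ and integrate from $s$ to $R$), one integration by parts gives directly
\[
0\le (n-2)\int_s^R \partial_\rho v\,d\rho - s^{n-1}(s^{2-n}-R^{2-n})\partial_s v(s,t)
\le (n-2)M - s^{n-1}(s^{2-n}-R^{2-n})\partial_s v(s,t),
\]
since $\int_s^R\partial_\rho v\,d\rho=M-v(s,t)\le M$. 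Now observe that $s^{2-n}-R^{2-n}\ge s^{2-n}/C_*$ is equivalent to $s^{n-2}\le R^{n-2}(1-1/C_*)$, i.e.\ to $s\le R(1-1/C_*)^{1/(n-2)}$; on that range the previous display immediately yields $\partial_s v(s,t)\le (n-2)MC_*/s$. That is the whole argument.

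Everything in your second and third paragraphs is a detour. There is no need to invoke the monotonicity of $\rho^{n-1}\partial_\rho v$, no need to bound $\partial_s v(R,t)$, and above all no need to send $s\downarrow 0$: the computation is done at a \emph{fixed} $s$ in the stated range, so the ``main obstacle'' you worry about never arises. Also note a minor algebraic slip: the lower bound you write, $\varphi(s)\ge (1-1/C_*)s^{2-n}$, is not the one that produces the constant $C_*$ (and does not hold on the claimed $s$-range either); the correct inequality is $\varphi(s)\ge s^{2-n}/C_*$, as above.
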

\begin{proof}
In light of the positivity of \(u\) and \(v\), it holds through integration by parts and the positivity of \(v\) that
\begin{align*}
0&\le\frac{1}{\sigma_n}\int_{\Omega\setminus B_s}\Delta v(x,t) (|x|^{2-n}-R^{2-n})dx\\
&=\int_s^R \partial_\rho(\rho^{n-1}\partial_\rho v(\rho,t))(\rho^{2-n}-R^{2-n})d\rho\\
&=(n-2)\int_s^R \partial_\rho v(\rho,t) d\rho-s^{n-1}(s^{2-n}-R^{2-n})\partial_s v(s,t)\\
&\le(n-2)M-s^{n-1}(s^{2-n}-R^{2-n})\partial_s v(s,t)
\end{align*}
for all \((s,t)\in (0,R)\times(0,T_{max})\). Let \(C_*>1\) be arbitrarily chosen. Since
\(
 s^{n-2}\le R^{n-2}(1-\frac{1}{C_*})
\)
implies
\(
s^{2-n}\le C_*(s^{2-n}-R^{2-n})
\) and \(\partial_s v\) is positive as shown in Lemma~\ref{lem1a}, we obtain the desired result.
\end{proof}

\subsection{On the mass accumulation function of \(u\)}
We now examine the mass accumulation function of \(u\) given by
\begin{equation}\label{AUG25}
U(s,t):=\frac{1}{\sigma_n}\int_{B_s} u(x,t) dx=\int_0^s \rho^{n-1}u(\rho,t)d\rho
\end{equation}
for $(s,t)\in[0,R]\times[0,T_{max})$.
Clearly, \(U\) is nonnegative and nondecreasing. Based on the observation that blow-up of the solution can only occur near the origin (Lemma~\ref{lem1a}), our analysis primarily focuses on the behavior of  \(U(s,t)\) near the origin. 
As a preparation, the following simple calculation is needed.

\begin{lem}\label{lem1c}
Let $(u,v)$ be a solution given in Proposition~\ref{prop_local} and $U$ be defined in \eqref{AUG25}. For any \(C_*>1\),  \[U\in C^0([0,T_{max}); C^1([0,R]))\cap C^{2,1}((0,R]\times(0,T_{max}))\] satisfies
\begin{equation}\label{eqn1c_1}
\begin{aligned}
\partial_t U(s,t)&\le \mathcal{L}(U(s,t))\\&:= s^{n-1}D(s^{-n+1}\partial_sU(s,t))\partial_s(s^{-n+1}\partial_s U(s,t))\\
&\quad+\chi(n-2) MC_*s^{-1}\partial_s U(s,t)
\end{aligned}
\end{equation}
for all \((s,t)\in(0,R(1-1/C_*)^\frac{1}{n-2}]\times(0,T_{max})\).
\end{lem}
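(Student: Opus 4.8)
The plan is to transfer both the regularity and the evolution law of $u$ onto $U$ through the identity $\partial_s U(s,t)=s^{n-1}u(s,t)$, and then to absorb the chemotactic flux into a manifestly useful term by means of the pointwise bound from Lemma~\ref{lem1b}.

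\textbf{Regularity.} For each $t$ one has $u(\cdot,t)\in W^{1,q}(\Omega)\hookrightarrow C^0(\overline\Omega)$ with $q>n$, continuously in $t\in[0,T_{max})$, so $s\mapsto\partial_s U(s,t)=s^{n-1}u(s,t)$ is continuous on $[0,R]$ (and vanishes at $s=0$) and depends continuously on $t$ in the supremum norm; since $U$ itself is likewise continuous in $(s,t)$, this yields $U\in C^0([0,T_{max});C^1([0,R]))$. On the interior set $(0,R]\times(0,T_{max})$ the weight $s^{n-1}$ is smooth and bounded below, so the $C^{2,1}$-regularity of $u$ passes to $\partial_sU$, whence $\partial_sU,\partial_{ss}U$ are continuous there, and $\partial_tU$ is produced by differentiation under the integral sign, which is legitimate on compact time subintervals because $\partial_t u$ is continuous on $\overline\Omega$ times such subintervals. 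Hence $U\in C^{2,1}((0,R]\times(0,T_{max}))$.

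\textbf{The evolution inequality.} Multiplying the radial equation \eqref{eqn_ur} by $\rho^{n-1}$ and integrating over $\rho\in(0,s)$, both flux terms are exact $\rho$-derivatives, so the fundamental theorem of calculus gives
\[\partial_tU(s,t)=s^{n-1}D(u)\partial_su+s^{n-1}u\,\partial_sv,\]
the boundary contribution at $\rho=0$ dropping because each term carries the factor $\rho^{n-1}$ while $D(u)\partial_\rho u$ and $u\,\partial_\rho v$ stay bounded near the origin (continuity of $u,\partial_\rho u,\partial_\rho v$ up to $\overline\Omega$; in fact $\partial_\rho v(0,t)=0$ by radial symmetry). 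Substituting $u=s^{1-n}\partial_sU$ and $\partial_su=\partial_s(s^{1-n}\partial_sU)$ rewrites the diffusion term as $s^{n-1}D(s^{1-n}\partial_sU)\partial_s(s^{1-n}\partial_sU)$, while for the chemotactic term $s^{n-1}u=\partial_sU\ge0$ (as $U$ is nondecreasing), so $s^{n-1}u\,\partial_sv=(\partial_sU)(\partial_sv)$, and Lemma~\ref{lem1b}, whose range of validity coincides with the one asserted here, bounds this by $(n-2)MC_*s^{-1}\partial_sU$. Adding the two contributions produces \eqref{eqn1c_1} (with the chemotactic constant $\chi$ there equal to $1$).

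\textbf{Main obstacle.} The only delicate point is the behaviour at $s=0$: one must verify that the boundary term created by the integration by parts genuinely vanishes, that the differentiation in $t$ under the integral is permitted, and that the $s$-differentiation of $s^{1-n}\partial_sU$ is meaningful despite the singular weight. All three are controlled by the classical-up-to-$\overline\Omega$ regularity from Proposition~\ref{prop_local}, which makes $u$, $\partial_\rho u$ and $\partial_\rho v$ bounded near the origin while the prefactor $\rho^{n-1}$ annihilates the potentially singular contributions; concretely, $\rho\mapsto\rho^{n-1}\bigl(D(u)\partial_\rho u+u\,\partial_\rho v\bigr)$ is $C^1$ on $(0,s]$, continuous up to $\rho=0$, and has the bounded derivative $\rho^{n-1}\partial_t u$ near $0$, hence is absolutely continuous on $[0,s]$, which legitimizes the fundamental theorem of calculus used above.
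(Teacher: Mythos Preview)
Your argument is correct and follows the same route as the paper: derive the identity \(\partial_t U=s^{n-1}D(u)\partial_s u+\partial_s U\,\partial_s v\) by integrating \eqref{eqn_ur}, then invoke Lemma~\ref{lem1b} together with \(\partial_s U\ge 0\). The paper's proof is essentially a two-line sketch of what you spell out in detail; your treatment of the regularity of \(U\) and of the vanishing boundary contribution at \(\rho=0\) fills in exactly the gaps the paper glosses over (and your remark that the constant \(\chi\) in the statement is just \(1\) is a valid observation about an apparent typographical leftover).
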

\begin{proof}
The regularity of \(U\) is directly derived from the properties of \(u\) in Proposition~\ref{prop_local}.
From the definition of \(U\), it is easily seen from \eqref{eqn_ur} that 
\begin{equation}\label{eqn1c_2}
\partial_t U(s,t)= s^{n-1}D(s^{-n+1}\partial_sU(s,t))\partial_s(s^{-n+1} \partial_s U(s,t))+\partial_s U(s,t) \partial_s v(s,t)
\end{equation}
for all \((s,t)\in(0,R)\times(0,T_{max})\). Hence, \eqref{eqn1c_1} can be straightforwardly deduced from \eqref{eqn1c_2} and Lemma~\ref{lem1b}. 
\end{proof}                   
In the next lemma, we show that when $m=1$, the inequality \eqref{eqn1c_1} provides the behavior of \(U\) near the origin.
\begin{lem}\label{lem1d}
Let $(u,v)$ be a solution given in Proposition~\ref{prop_local} and $U$ be defined in \eqref{AUG25}. 
Assume that \(D\) satisfies \eqref{assume_D1} with some \(k_D>0\), and that 
\[m=1\quad\text{ and }\quad M<\frac{2}{n-2}k_D.\]
 Then, one can find \(\beta=\beta(n,k_D,M)\in(0,2)\) for which it holds that
\begin{equation}\label{eqn1d_1}
U(s,t)\le Z(s):=\frac{L}{\delta_1^{n-\beta}}s^{n-\beta}\quad\text{ for all }(s,t)\in(0,\delta_1)\times(0,T_{max})
 \end{equation}
 with some  \(\delta_1=\delta_1(\beta,R, M,u_0)\in(0,R)\).
\end{lem}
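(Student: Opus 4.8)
The plan is to use the differential inequality \eqref{eqn1c_1} from Lemma~\ref{lem1c} together with a comparison principle for the nonlinear parabolic operator $\mathcal{L}$, using the explicit power function $Z(s)=\frac{L}{\delta_1^{n-\beta}}s^{n-\beta}$ as a supersolution. First I would choose the parameters carefully: since $M<\frac{2}{n-2}k_D$, I can pick $C_*>1$ close enough to $1$ and then $\beta\in(0,2)$ close enough to $2$ so that the inequality
\[
k_D(n-\beta)(\beta-1)\;\ge\;(n-2)MC_*(n-\beta)
\]
(which after dividing by $(n-\beta)$ reads $k_D(\beta-1)\ge(n-2)MC_*$) holds; this is possible precisely because $(n-2)M<2k_D$ lets us take $\beta-1$ slightly above $\frac{(n-2)M}{k_D}$, which is below $1$. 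I would set $\delta_1\le R(1-1/C_*)^{1/(n-2)}$ so that \eqref{eqn1c_1} is valid on $(0,\delta_1)$, and also shrink $\delta_1$ (depending on $\beta,R,M,u_0$) so that the boundary inequality $U(\delta_1,t)\le Z(\delta_1)=L\delta_1^{n-\beta}/\delta_1^{n-\beta}=L$ holds — but note $U(\delta_1,t)\le U(R,t)=L$ always, by mass conservation \eqref{prop_u}, so the boundary comparison at $s=\delta_1$ is automatic. At $s=0$ both $U$ and $Z$ vanish. At $t=0$ one needs $U(s,0)=\int_0^s\rho^{n-1}u_0(\rho)\,d\rho\le \frac{L}{\delta_1^{n-\beta}}s^{n-\beta}$; since $u_0\in W^{1,\infty}$ gives $U(s,0)\le \frac{\|u_0\|_\infty}{n}s^n$, this holds once $\frac{\|u_0\|_\infty}{n}s^{\beta}\le \frac{L}{\delta_1^{n-\beta}}$ for $s\le\delta_1$, i.e. once $\delta_1$ is small in terms of $\|u_0\|_\infty, L, n, \beta$.

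The core computation is to verify that $Z$ is a supersolution, i.e. $\partial_t Z - \mathcal{L}(Z)\ge 0$ on the relevant region. Since $Z$ is time-independent, $\partial_t Z=0$, so I must check $\mathcal{L}(Z(s))\le 0$. Writing $a:=L/\delta_1^{n-\beta}$ so $Z(s)=as^{n-\beta}$, one computes $s^{-n+1}\partial_s Z=a(n-\beta)s^{1-\beta}$ and $\partial_s(s^{-n+1}\partial_s Z)=a(n-\beta)(1-\beta)s^{-\beta}$, which is nonpositive since $\beta>1$. Using the lower bound \eqref{assume_D1}, $D(\xi)\ge k_D(1+\xi)^{m-1}=k_D$ for $m=1$, and the fact that the leading diffusion term $s^{n-1}D(\cdot)\partial_s(s^{-n+1}\partial_s Z)$ is negative while the drift term $\chi(n-2)MC_*s^{-1}\partial_s Z = (n-2)MC_* a(n-\beta)s^{n-1-\beta}$ is positive (here $\chi=1$ with the sign convention of the repulsive model giving $+$), I get
\[
\mathcal{L}(Z(s))\le s^{n-1}\cdot k_D\cdot a(n-\beta)(1-\beta)s^{-\beta}+(n-2)MC_*\,a(n-\beta)s^{n-1-\beta}
= a(n-\beta)s^{n-1-\beta}\bigl[(n-2)MC_*-k_D(\beta-1)\bigr]\le 0
\]
by the parameter choice above. (One subtlety: the monotonicity of $D$ matters only insofar as we used $D\ge k_D$; since the factor multiplying $D$ is negative, replacing $D$ by the smaller quantity $k_D$ increases $\mathcal{L}(Z)$, which is the direction I want, so the bound is legitimate regardless of whether $D$ is increasing.)

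With $Z$ established as a supersolution and the ordering holding on the parabolic boundary $\{s=0\}\cup\{s=\delta_1\}\cup\{t=0\}$ of $(0,\delta_1)\times(0,T_{max})$, a comparison argument for the quasilinear operator $\mathcal{L}$ yields $U\le Z$ throughout, which is \eqref{eqn1d_1}. I expect the main obstacle to be making the comparison principle rigorous: $\mathcal{L}$ is a degenerate-looking quasilinear operator with a singular coefficient $s^{-1}$ at the origin and its coefficients depend on $U$ through $D(s^{-n+1}\partial_s U)$, so one cannot simply quote a textbook maximum principle. The standard remedy is to argue by contradiction at a first touching time/point: if $w:=U-Z$ first attains a positive value, then at an interior touching point $(s_0,t_0)$ one has $\partial_t w\ge 0$, $\partial_s w=0$, $\partial_s^2 w\le 0$, and because at that point $s_0^{-n+1}\partial_s U=s_0^{-n+1}\partial_s Z$ the nonlinear coefficient $D(\cdot)$ evaluates to the \emph{same} value for $U$ and $Z$, so the quasilinear operator becomes locally linear and one derives $0\le\partial_t w(s_0,t_0)\le \mathcal{L}(U)(s_0,t_0)-\mathcal{L}(Z)(s_0,t_0)<0$, a contradiction; a routine $\varepsilon$-perturbation ($Z_\varepsilon:=(1+\varepsilon)Z+\varepsilon$, or strengthening $Z$ to a strict supersolution) handles the non-strictness and the touching-at-the-boundary cases, and the singularity at $s=0$ is harmless because the comparison can be run on $[\eta,\delta_1]$ for $\eta\downarrow 0$ using $U(\eta,t)\le \frac{\|u_0\|_\infty}{?}$... more precisely using that $U$ and $Z$ are both continuous up to $s=0$ with $U(0,t)=Z(0)=0$.
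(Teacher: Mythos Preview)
Your strategy is exactly the paper's: show that the power function $Z(s)=\frac{L}{\delta_1^{n-\beta}}s^{n-\beta}$ is a supersolution of the operator $\mathcal{L}$ from Lemma~\ref{lem1c}, verify the ordering on the parabolic boundary, and conclude by a first-touching comparison argument (the paper packages the non-strictness via the auxiliary function $Y=e^{-t}(Z-U)$, which is equivalent to your $\varepsilon$-perturbation).

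There is, however, an arithmetic slip that actually weakens your conclusion. You write $s^{-n+1}\partial_s Z=a(n-\beta)s^{1-\beta}$, but since $s^{1-n}\cdot s^{n-\beta-1}=s^{-\beta}$ the correct value is $a(n-\beta)s^{-\beta}$, and hence
\[
\partial_s\bigl(s^{-n+1}\partial_s Z\bigr)=-a(n-\beta)\beta\,s^{-\beta-1},
\]
not $a(n-\beta)(1-\beta)s^{-\beta}$. With the correct expressions one finds
\[
\mathcal{L}(Z(s))\le a(n-\beta)\,s^{\,n-\beta-2}\Bigl[(n-2)MC_*-k_D\beta\Bigr],
\]
so the supersolution condition is $k_D\beta\ge(n-2)MC_*$, not $k_D(\beta-1)\ge(n-2)MC_*$. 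This is exactly what the paper uses: fix $\beta\in\bigl(\tfrac{M(n-2)}{k_D},2\bigr)$ and then $C_*\in\bigl(1,\tfrac{k_D\beta}{M(n-2)}\bigr)$. Your version would force $\beta>1+\tfrac{(n-2)MC_*}{k_D}$, which for $\beta<2$ requires $M<\tfrac{k_D}{n-2}$ rather than the full hypothesis $M<\tfrac{2k_D}{n-2}$; in particular your restriction $\beta>1$ is an artifact of the error (the correct diffusion term is nonpositive for every $\beta\in(0,2)$). Once this computation is corrected, the remainder of your outline goes through exactly as you describe.
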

       
\begin{proof}
Given the condition \(M<2k_D/(n-2)\), we first fix $\beta\in\left(\frac{M (n-2)}{k_D},2\right)$,  choose $C_{*}\in \left(1, \frac{k_D \beta}{M (n-2)}\right)$,
and define \(\delta_1\) as
\[
\delta_1=\min\left\{R\left(1-\frac{1}{C_*}\right)^{\frac{1}{n-2}}, \left(\frac{nL}{\norm{u_0}_\infty}\right)^\frac{1}{n},\left(\frac{(LM)^n}{n\sigma_n(2-\beta)}\right)^\frac{1}{n(n-2)}\right\}.
\]
Let \((s,t)\in (0,\delta_1)\times(0,T_{max})\). By performing a fundamental calculation on \(Z(s)\), we obtain
\begin{equation*}
\begin{split}
\mathcal{L}(Z(s))&=s^{n-1}D(s^{-n+1}\partial_sZ(s))\partial_s(s^{-n+1}\partial_s Z(s))+M(n-2)C_* s^{-1}\partial_s Z(s)\\
&\le-\frac{L\beta (n-\beta)k_D}{\delta_1^{n-\beta}}s^{n-\beta-2}+M(n-2)C_*\frac{L(n-\beta)}{ \delta_1^{n-\beta}}s^{n-\beta-2}\\
&=\frac{L(n-\beta)k_D}{ \delta_1^{n-\beta}}\left[-\beta+\frac{M(n-2)C_*}{k_D}\right]s^{n-\beta-2}\\
&\le 0.
\end{split}
\end{equation*}
In addition, we note that \(U(s,t)\le Z(s)\) along the parabolic boundary of \((0,\delta_1)\times (0,T_{max})\). Indeed, it is trivial that \(U(0,t)=0=Z(0)\) and  \(U(\delta_1,t)\le L=Z(\delta_1)\) for all \(t\in(0,T_{max})\). Furthermore, at \(t=0\), we find that
\[
U(s,0)\le \frac{\norm{u_0}_\infty}{n}s^n\le\frac{\norm{u_0}_\infty}{n}\delta_1^{\beta} s^{n-\beta}=\frac{\norm{u_0}_\infty\delta_1^n}{nL} Z(s)\le Z(s)
\]
for all \(s\in(0,\delta_1)\). Here, the last inequality of the above holds due to the fact that  \(\norm{u_0}_\infty\delta_1^n\le nL\).  
We now define an auxiliary function \(Y(s,t)=e^{-t}(Z(s)-U(s,t))\) over the domain \((0,\delta_1)\times(0,T_{max})\).  Then, we have
\[
\partial_t Y(s,t)+Y(s,t)=-e^{-t}\partial_t U(s,t)\ge e^{-t}(\mathcal{L}(Z(s))-\mathcal{L}(U(s,t))).
\]
To prove the inequality in \eqref{eqn1d_1}, we employ a variant of the comparison principle: Let \(\epsilon>0\) and define the set
\[
\mathcal{A}_\epsilon:=
\left\{ 
t\in(0,T_{max}) : Y(s,t)\le -\epsilon \,\text{ for some } s\in(0,\delta_1)
\right\}.
\]
Assume, for the sake of contradiction, that \(\mathcal{A}_\epsilon \neq\emptyset\). Let 
\(
\inf \mathcal{A}_\epsilon =t_\epsilon.
\)
Then, \(t_\epsilon\in(0,T_{max})\) due to \(Y\ge 0\) on the parabolic boundary of \((0,\delta_1)\times(0,T_{max})\). Thus we can choose \(s_\epsilon\in(0,\delta_1)\) fulfilling \(Y(s_\epsilon,t_\epsilon)=-\epsilon\). Note that 
\(\partial_s Y(s_\epsilon,t_\epsilon)=0\), \(\partial_s^2 Y(s_\epsilon,t_\epsilon)\ge 0\), and \( \partial_t Y(s_\epsilon,t_\epsilon)\le 0\). Therefore, it holds that
\begin{align*}
-\epsilon&\ge \partial_t Y(s_\epsilon,t_\epsilon)+ Y(s_\epsilon,t_\epsilon)\\
&=D(s_\epsilon^{-n+1}\partial_s Z(s_\epsilon,t_\epsilon))\partial_s^2 Y(s_\epsilon,t_\epsilon)\ge 0,
\end{align*}
which contradicts \(\epsilon>0\).
This implies that \(\mathcal{A}_\epsilon=\emptyset\).
Since \(\epsilon>0\) can be taken arbitrarily small, it follows that \(Y(s,t)\ge 0\) for every \((s,t)\in(0,\delta_1)\times(0,T_{max})\), thereby proving \eqref{eqn1d_1}.
\end{proof}

\subsection{Proof of Theorem~\ref{thm1}}
The upper estimate of \(U\) established in Lemma~\ref{lem1d}  leads to a more refined behavior for \(\nabla v\) near the origin compared to that in Lemma~\ref{lem1b}.

\begin{lem}\label{lem1e}
Let $(u,v)$ be a solution given in Proposition~\ref{prop_local}.
Assume that \(D\) satisfies \eqref{assume_D1} with some \(k_D>0\), and that 
\[m=1\quad\text{ and }\quad M<\frac{2}{n-2}k_D.\]
Then, there exists \(\beta\in(0,2)\) such that for some \(C>0\)
\begin{equation}\label{eqn1e_0}
\partial_sv(s,t)\le Cs^{-\beta+1}\quad\text{for all } (s,t)\in(0,R)\times(0,T_{max}).
\end{equation}
 In particular, for any \(\epsilon>0\), we can find \(\delta=\delta(\epsilon)>0\) satisfying
\begin{equation}\label{eqn1e_0_1}
\norm{\nabla v(\cdot,t)}_{L^n(B_\delta)}\le \epsilon\quad\text{for all } t\in(0,T_{max}).
\end{equation}
\end{lem}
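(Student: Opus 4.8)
\textbf{Proof strategy for Lemma~\ref{lem1e}.}
The plan is to read off a pointwise bound on \(\partial_s v\) directly from the elliptic equation \eqref{eqn_vr} together with the upper estimate of the mass accumulation function \(U\) from Lemma~\ref{lem1d}, and then patch the resulting near-origin bound with the away-from-origin bound of Lemma~\ref{lem1a}. First I would integrate \eqref{eqn_vr}: multiplying by \(s^{n-1}\) and integrating over \((0,s)\) gives \(s^{n-1}\partial_s v(s,t)=\int_0^s\rho^{n-1}u(\rho,t)v(\rho,t)\,d\rho\), and since \(0<v\le M\) by \eqref{prop_v} this yields the elementary bound \(\partial_s v(s,t)\le M s^{1-n}U(s,t)\) valid on all of \((0,R)\times(0,T_{max})\).

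Next, let \(\beta\in(0,2)\) and \(\delta_1\in(0,R)\) be the constants supplied by Lemma~\ref{lem1d}. Inserting \(U(s,t)\le \tfrac{L}{\delta_1^{\,n-\beta}}s^{n-\beta}\) from \eqref{eqn1d_1} into the above bound gives \(\partial_s v(s,t)\le \tfrac{ML}{\delta_1^{\,n-\beta}}\,s^{1-\beta}\) on \((0,\delta_1)\times(0,T_{max})\). On the remaining range \([\delta_1,R)\) I would invoke Lemma~\ref{lem1a}, which yields \(\partial_s v(s,t)\le \delta_1^{1-n}LM\); since \(s\mapsto s^{1-\beta}\) is bounded below by a positive constant on the compact interval \([\delta_1,R]\) (namely \(\delta_1^{1-\beta}\) if \(\beta\le 1\) and \(R^{1-\beta}\) if \(\beta>1\)), this constant bound is dominated by a multiple of \(s^{1-\beta}\). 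Choosing \(C>0\) to be the larger of the two constants then gives \eqref{eqn1e_0}.

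Finally, for \eqref{eqn1e_0_1} I would pass to the \(L^n\)-norm in radial coordinates: for any \(\delta\in(0,R)\) and any \(t\in(0,T_{max})\),
\[
\norm{\nabla v(\cdot,t)}_{L^n(B_\delta)}^n=\sigma_n\int_0^\delta s^{n-1}\,|\partial_s v(s,t)|^n\,ds\le \sigma_n C^n\int_0^\delta s^{2n-1-\beta n}\,ds=\frac{\sigma_n C^n}{n(2-\beta)}\,\delta^{\,n(2-\beta)},
\]
where the integral is finite precisely because \(\beta<2\), so that \(2n-1-\beta n>-1\). The right-hand side is independent of \(t\) and vanishes as \(\delta\downarrow 0\); hence, given \(\epsilon>0\), it suffices to pick \(\delta=\delta(\epsilon)>0\) small enough that \(\tfrac{\sigma_n C^n}{n(2-\beta)}\,\delta^{\,n(2-\beta)}\le \epsilon^n\).

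\textbf{Main obstacle.} There is no genuine difficulty remaining once Lemma~\ref{lem1d} is in hand: the estimate \eqref{eqn1e_0} is an immediate consequence of the elliptic equation and the \(L^\infty\)-bound on \(v\). The only point demanding care is the bookkeeping of the exponent — one must carry the \emph{same} \(\beta<2\) from Lemma~\ref{lem1d} all the way through, since it is exactly the strict inequality \(\beta<2\) that keeps the weighted integral \(\int_0^\delta s^{2n-1-\beta n}\,ds\) convergent and thereby produces the uniform-in-time smallness \eqref{eqn1e_0_1}.
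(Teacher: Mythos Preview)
Your proposal is correct and follows essentially the same route as the paper: both integrate \eqref{eqn_vr} against the bound \(v\le M\) to obtain \(\partial_s v\le M s^{1-n}U\), feed in the estimate \eqref{eqn1d_1} from Lemma~\ref{lem1d}, and then compute the radial \(L^n\)-integral to extract the uniform smallness \eqref{eqn1e_0_1}. Your treatment is in fact slightly more complete, since you explicitly patch the near-origin bound with Lemma~\ref{lem1a} on \([\delta_1,R)\) to cover all of \((0,R)\) in \eqref{eqn1e_0}, whereas the paper's written proof only displays the argument on a small ball and leaves the outer region implicit.
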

\begin{proof}
Note that it is enough to consider the case \(\epsilon\in(0,1)\). Let \(\epsilon\in(0,1)\). We fix \(\beta\in(0,2)\) and \(\delta_1\) as in Lemma~\ref{lem1d}, and then define 
\[ C_1=\left(\frac{(LM)^n}{n\sigma_n(2-\beta)}\right)^\frac{1}{n(n-2)}
\]
and
\[
\delta=\epsilon^\frac{1}{2-\beta}C_1^{-\frac{n-2}{2-\beta}} \delta_1^\frac{n-\beta}{2-\beta}.
\]
Since the definition of \(\delta_1\) yields \(\delta_1\le C_1\le C_1 \epsilon^{-\frac{1}{n-2}}\), it holds that 
\[\frac{\delta}{\delta_1}=\epsilon^\frac{1}{2-\beta}C_1^{-\frac{n-2}{2-\beta}}\delta_1^\frac{n-2}{2-\beta}\le 1,\]
that is, \(\delta\le\delta_1\).
 For fixed \((s,t)\in(0,\delta)\times(0,T_{max})\), it follows from \eqref{eqn_vr} and \eqref{prop_v} that 
\begin{equation}\label{eqn1e_1}
\partial_s v(s,t)=s^{-n+1}\int_0^s \rho^{n-1} uv ds\le M s^{-n+1}U(s,t)
\end{equation}
which, along with \eqref{eqn1d_1}, proves \eqref{eqn1e_0}. Furthermore, \eqref{eqn1e_0} leads to \eqref{eqn1e_0_1},
\begin{equation*}
\norm{\nabla v(\cdot,t)}_{L^n(B_\delta)}
=\left(\frac{1}{\sigma_n}\int_0^\delta s^{n-1}(\partial_s v)^n ds\right)^{1/n}
\le\frac{C_1^{n-2}}{\delta_1^{n-\beta}}\delta^{2-\beta}=\epsilon.
\end{equation*}
This completes the proof.
\end{proof}

We are ready to prove the boundedness of \(u\) in \(L^p\) for large \(p\).

\begin{lem}\label{lem1f}
Let $(u,v)$ be a solution given in Proposition~\ref{prop_local}.
Assume that \(D\) satisfies \eqref{assume_D1} with some \(k_D>0\), and that either
 \[m>1,\quad \text{ or }\quad  m=1\quad \text{and}\quad M<\frac{2}{n-2}k_D\] 
 holds. For any \(p>\max\left\{1,m-1\right\}\), there exists \(C=C(p)>0\) such that
\begin{equation}\label{eqn1f_1}
\sup_{t\in(0,T_{max})}\norm{u(\cdot,t)}_p\le C.
\end{equation}
\end{lem}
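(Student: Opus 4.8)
\textbf{Proof proposal for Lemma~\ref{lem1f}.}

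The plan is to run a standard $L^p$-testing argument on the $u$-equation \eqref{eqn_ur}, where the chemotactic contribution is split according to whether we are near or away from the spatial origin, using Lemma~\ref{lem1a} for the outer region and Lemma~\ref{lem1e} (together with Lemma~\ref{lem1b} when $m>1$) for the inner region. Concretely, fix $p>\max\{1,m-1\}$ and compute $\frac{d}{dt}\int_\Omega u^p$. The diffusion term yields the good contribution $-c_p\int_\Omega D(u)u^{p-2}|\nabla u|^2$, which under \eqref{assume_D1} dominates $-c_p k_D\int_\Omega (1+u)^{m-1}u^{p-2}|\nabla u|^2$; rewriting this in terms of $\nabla(u^{(p+m-1)/2})$ (or $\nabla(1+u)^{(p+m-1)/2}$) gives a dissipation term controlling $\|\nabla w\|_2^2$ with $w:=(1+u)^{(p+m-1)/2}$. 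The cross term, after integration by parts and using the no-flux boundary condition, is $(p-1)\int_\Omega u^{p-1}\nabla u\cdot\nabla v = -\frac{p-1}{p}\int_\Omega u^p\,\Delta v + (\text{boundary})$, and by \eqref{eqn_vr} we have $\Delta v = uv\ge 0$, so actually $-\frac{p-1}{p}\int_\Omega u^p\Delta v\le 0$ --- wait, one must be careful with the boundary term at $\partial\Omega$ since $v=M$ there is not a Neumann condition; the cleaner route is to keep the cross term as $(p-1)\int_\Omega u^{p-1}\nabla u\cdot\nabla v$ and estimate it directly by $\nabla v$ bounds rather than integrating by parts.

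Thus I would bound the cross term by Young's inequality as
\[
(p-1)\Big|\int_\Omega u^{p-1}\nabla u\cdot\nabla v\Big|
\le \eta\int_\Omega (1+u)^{m-1}u^{p-2}|\nabla u|^2 + C_\eta\int_\Omega u^{p-m+1}|\nabla v|^2,
\]
absorbing the first piece into the diffusion dissipation for small $\eta$. It remains to control $\int_\Omega u^{p-m+1}|\nabla v|^2$. Split $\Omega=B_\delta\cup(\Omega\setminus B_\delta)$. On $\Omega\setminus B_\delta$, Lemma~\ref{lem1a} gives $|\nabla v|\le\delta^{-n+1}LM$, so this contributes $\lesssim \int_\Omega u^{p-m+1}\le \int_\Omega(1+w^{2})^{(p-m+1)/(p+m-1)}$, which, since $p-m+1<p+m-1$, is controlled by $\|w\|_2^{2}+1$ plus a lower power; this is handled by the Gagliardo–Nirenberg inequality $\|w\|_{2\theta}\lesssim\|\nabla w\|_2^{\lambda}\|w\|_{2/(p+m-1)\cdot(\dots)}^{1-\lambda}+\|w\|_{\dots}$ combined with the mass bound $\|u\|_1=\sigma_n L$ from \eqref{prop_u}, using that $p>m-1$ keeps the relevant exponent subcritical. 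On $B_\delta$, I use $\int_{B_\delta}u^{p-m+1}|\nabla v|^2\le \|\nabla v\|_{L^n(B_\delta)}^2\,\|u^{p-m+1}\|_{L^{n/(n-2)}(B_\delta)}$ by Hölder, and $\|u^{p-m+1}\|_{n/(n-2)}=\|w\|_{2n(p-m+1)/((n-2)(p+m-1))}^{2(p-m+1)/(p+m-1)}$; applying Gagliardo–Nirenberg again bounds this by $C(\|\nabla w\|_2^{2a}+1)$ for a suitable $a\in(0,1)$ provided the Sobolev exponent works out (here $n\ge3$ and the exponent $\frac{2n(p-m+1)}{(n-2)(p+m-1)}<\frac{2n}{n-2}$ exactly because $p-m+1<p+m-1$, so $a<1$ and the bound is subcritical). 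By Lemma~\ref{lem1e}, choosing $\epsilon=\epsilon(p)$ small makes $\|\nabla v\|_{L^n(B_\delta)}^2$ as small as we like (for $m=1$; for $m>1$ one instead uses $\|\nabla v\|_{L^n(B_\delta)}\le C\delta^{?}$ from Lemma~\ref{lem1b}, noting $|\nabla v|\lesssim 1/s$ is in $L^n_{\mathrm{loc}}$ precisely because $n$... actually $s^{-n}\cdot s^{n-1}=s^{-1}$ is not integrable, so for $m>1$ one must instead absorb using the extra room from the exponent $p-m+1<p-1+1$ — see below), so after fixing $\eta$ small and then $\delta$ small the $B_\delta$-contribution is also absorbed into $\frac{1}{2}\|\nabla w\|_2^2$ up to an additive constant.

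The upshot is a differential inequality $\frac{d}{dt}\int_\Omega u^p + c\int_\Omega u^p \le C$, which with Grönwall gives \eqref{eqn1f_1}. The main obstacle, and the place requiring care, is the near-origin term $\int_{B_\delta}u^{p-m+1}|\nabla v|^2$: one must verify that the exponent bookkeeping in the Gagliardo–Nirenberg step produces a power $2a<2$ of $\|\nabla w\|_2$ so that Young's inequality permits absorption, and this is exactly where the hypothesis $p>m-1$ (so that $p-m+1>0$, making $|\nabla v|^2$ multiply a genuine positive power of $u$, and $p-m+1<p+m-1$ when $m>1$, giving subcriticality) is used, together with the smallness of $\|\nabla v\|_{L^n(B_\delta)}$ to beat the critical case $m=1$ borderline. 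For $m>1$, since Lemma~\ref{lem1b} only gives $|\nabla v|\lesssim 1/s$ near the origin (which is borderline in $L^n$), one leverages instead the strictly smaller power $p-m+1$ of $u$ in the bad term: the gain $m-1>0$ in the diffusion exponent $p+m-1$ relative to $p-m+1$ widens the Gagliardo–Nirenberg window enough that $\|\nabla v\|_{L^n(B_\delta)}$ need not be small, only finite, and a crude bound $\|\nabla v\|_{L^{q}(B_\delta)}<\infty$ for some $q$ slightly above... one checks the exponents close. I would present the $m=1$ and $m>1$ cases in parallel, pointing out that in both the key inequality reads $\int_\Omega u^{p-m+1}|\nabla v|^2\le \tfrac12\|\nabla w\|_2^2+C$, and then conclude by Grönwall.
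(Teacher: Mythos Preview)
Your $m=1$ argument is essentially the paper's: split $\Omega=B_\delta\cup(\Omega\setminus B_\delta)$, use Lemma~\ref{lem1a} to make $\nabla v$ bounded on the outer annulus, and use the smallness of $\|\nabla v\|_{L^n(B_\delta)}$ from Lemma~\ref{lem1e} to absorb the inner piece; the only cosmetic difference is that you apply Young before H\"older, whereas the paper applies H\"older directly to $u^{p/2}\,\nabla u^{p/2}\cdot\nabla v$ with exponents $\tfrac{2n}{n-2},2,n$. Both routes produce the same critical term $\|\nabla u^{p/2}\|_2^2$ multiplied by the small factor $\|\nabla v\|_{L^n(B_\delta)}$, so this part is fine.

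The $m>1$ case, however, contains a genuine error. You write that ``$\|\nabla v\|_{L^n(B_\delta)}$ need not be small, only finite'', but it is \emph{not} finite: Lemma~\ref{lem1b} gives $|\nabla v(s,\cdot)|\lesssim s^{-1}$, and $\int_0^\delta s^{n-1}s^{-n}\,ds=\int_0^\delta s^{-1}\,ds=\infty$. So the H\"older split you set up, with $\nabla v\in L^n$ on $B_\delta$, collapses. You then gesture at the correct fix (``$\|\nabla v\|_{L^q}<\infty$ for some $q$ slightly above\ldots'') without specifying which $q$ or checking that the resulting exponent on $\|\nabla w\|_2$ stays below $2$.

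Here is how the paper closes this, and it is cleaner than splitting into inner and outer regions. Combining Lemmas~\ref{lem1a} and~\ref{lem1b} gives $s\,\partial_s v$ bounded on all of $(0,R)\times(0,T_{\max})$, hence $\|\nabla v(\cdot,t)\|_{q'}$ is uniformly bounded for every $q'<n$. One then applies a single three-factor H\"older estimate,
\[
\Bigl|\int_\Omega u^{p-1}\nabla u\cdot\nabla v\Bigr|
\;\lesssim\;\bigl\|\nabla u^{\frac{p+m-1}{2}}\bigr\|_2\,\bigl\|u^{\frac{p-m+1}{2}}\bigr\|_q\,\|\nabla v\|_{q'},
\qquad q=\tfrac{p+m-1}{p-m+1}\cdot\tfrac{2n}{n-2},\quad \tfrac1q+\tfrac1{q'}=\tfrac12.
\]
The point is that $m>1$ forces $\frac{p-m+1}{p+m-1}<1$, so $q>\frac{2n}{n-2}$ and therefore $q'<n$; thus $\|\nabla v\|_{q'}$ is a genuine finite constant. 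Sobolev then gives $\|u^{(p-m+1)/2}\|_q=\|u^{(p+m-1)/2}\|_{2n/(n-2)}^{(p-m+1)/(p+m-1)}\lesssim\|\nabla u^{(p+m-1)/2}\|_2^{(p-m+1)/(p+m-1)}+C$, and since the total power of $\|\nabla u^{(p+m-1)/2}\|_2$ is $1+\frac{p-m+1}{p+m-1}=\frac{2p}{p+m-1}<2$, Young's inequality absorbs it into the dissipation. No inner/outer decomposition is needed for $m>1$, and no smallness is required --- the strict inequality $q'<n$ does all the work.
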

\begin{proof}
Let us fix \(p>\max\left\{1,m-1\right\}\). We first consider the case that \[m=1\quad\text{ and }\quad M<\frac{2}{n-2}k_D.\] Let \(\epsilon=\frac{k_D}{8pB}\), where \(B=B(n,\Omega)>0\) is the constant appeared in Sobolev embedding theorem
\[\norm{f}_\frac{2n}{n-2}\le B(\norm{\nabla f}_2+\norm{f}_2)\quad\text{ for all } f\in H^1(\Omega).\]
 For such \(\epsilon\), we take \(\delta>0\) as in Lemma~\ref{lem1e}. By applying Lemma~\ref{lem1a} with \(\delta_0=\delta\), we obtain that \(\partial_s v\) is bounded in \([\delta,R)\times(0,T_{max})\).
Thus, it holds that 
 \begin{equation}\label{eqn1f_2}
 \norm{\nabla v(\cdot,t)}_{L^n(B_\delta)}\le \epsilon\,\text{ and }\, \norm{\nabla v(\cdot,t)}_{L^\infty(\Omega\backslash B_\delta)} \le C_2\quad\text{for all }\,t\in(0,T_{max})
\end{equation}
with some \(C_2=C_2(\delta)>0\). For fixed \(t\in(0,T_{max})\), we now observe standard \(L^p\)-estimates for \(u\):
\begin{equation*}
\begin{split}
\frac{d}{dt}\norm{u(\cdot,t)}_p^p&+\frac{4k_D(p-1)}{p}\norm{\nabla u^\frac{p}{2}}_2^2
\\&\le 2(p-1)\norm{u^\frac{p}{2}\nabla u^\frac{p}{2}\cdot\nabla v}_1\\
&=2(p-1)\left(\norm{u^\frac{p}{2}\nabla u^\frac{p}{2}\cdot \nabla v}_{L^1(B_\delta)}+\norm{u^\frac{p}{2}\nabla u^\frac{p}{2}\cdot \nabla v }_{L^1(\Omega\backslash B_\delta)}\right)\\
&=:I_1+I_2.
\end{split}
\end{equation*}
By Sobolev embedding, the Gagliardo-Nirenberg inequality,  and Young's inequality, we have 
\begin{align*}
\norm{u^\frac{p}{2}}_\frac{2n}{n-2}&\le B(\norm{\nabla u^\frac{p}{2}}_2+\norm{u^\frac{p}{2}}_2)\\
&\le B(\norm{\nabla u^\frac{p}{2}}_2+C_3(\norm{\nabla u^\frac{p}{2}}_2^\theta\norm{u}_1^{\frac{p}{2}(1-\theta)}+\norm{u}_1^{\frac{p}{2}}))\\
&\le2B\norm{\nabla u^\frac{p}{2}}_2+C_4,
\end{align*}
where \(C_3>0\), \(C_4=C_4(p)>0\), and \(\theta=\frac{p-1}{p-1+2/n}\in(0,1)\). Hence, it follows from \eqref{eqn1f_2} and Young's inequality that for some \(C_5=C_5(p)>0\)
\begin{equation*}
\begin{split}
I_1&\le 2(p-1)\norm{\nabla u^\frac{p}{2}}_2\norm{u^\frac{p}{2}}_\frac{2n}{n-2}\norm{\nabla v}_{L^n(B_\delta)}\\
&\le8 B(p-1)\epsilon\norm{\nabla u^\frac{p}{2}}_2^2+C_5\\
&\le\frac{k_D(p-1)}{p} \norm{\nabla u^\frac{p}{2}}_2^2+C_5.
\end{split}
\end{equation*}
Similarly,
\begin{equation*}
\begin{split}
I_2&\le 2(p-1)\norm{|\nabla u^\frac{p}{2}| u^\frac{p}{2}}_1\norm{\nabla v}_{L^\infty(\Omega\backslash B_\delta)}\\
&\le 2(p-1)C_2\norm{\nabla u^\frac{p}{2}}_2\norm{u^\frac{p}{2}}_2\\
&\le 2(p-1)C_2C_3\norm{\nabla u^\frac{p}{2}}_2(\norm{\nabla u^\frac{p}{2}}_2^\theta\norm{u}_1^{\frac{p}{2}(1-\theta)}+\norm{u}_1^{\frac{p}{2}}) \\
&\le \frac{k_D(p-1)}{p}\norm{\nabla u^\frac{p}{2}}_2^2+C_6,
\end{split}
\end{equation*}
where \(C_6=C_6(p)>0\) and  \(\theta=\frac{p-1}{p-1+2/n}\in(0,1)\).
Therefore,  it follows from the above estimates that
\begin{equation}\label{eqn1f_3}
\frac{d}{dt}\norm{u(\cdot,t)}_p^p+\frac{2k_D(p-1)}{p}\norm{\nabla u^\frac{p}{2}}_2^2\le C(p)\quad\text{for all }\,t\in(0,T_{max}).
\end{equation}
Next, in the case of \[m>1,\] we use Lemma~\ref{lem1a} and Lemma~\ref{lem1b} to control \(\nabla v\).
In a similar way, let \(t\in(0,T_{max})\), and we see that
\begin{equation}\label{eqn1f_4}
\frac{d}{dt}\norm{u(\cdot,t)}_p^p+k_Dp(p-1)\int_\Omega (1+u)^{m-1}u^{p-2}|\nabla u|^2 dx
\le p(p-1)\norm{ u^{p-1}\nabla u\cdot\nabla v}_1.
\end{equation}
 We first note that 
\begin{equation}\label{eqn1f_5}
\frac{4k_Dp(p-1)}{(p+m-1)^2}\int_\Omega |\nabla u^\frac{p+m-1}{2}|^2 dx\le k_Dp(p-1)\int_\Omega (1+u)^{m-1}u^{p-2}|\nabla u|^2 dx.
\end{equation}
On the one hand, a simple calculation and H\"older's inequality yields
\begin{equation*}
\begin{split}
p(p-1)\norm{ u^{p-1}\nabla u\cdot\nabla v}_1&=\frac{2p(p-1)}{p+m-1}\norm{ u^\frac{p-m+1}{2}\nabla u^\frac{p+m-1}{2}\cdot\nabla v}_1\\
&\le \frac{2p(p-1)}{p+m-1} \norm{\nabla u^\frac{p+m-1}{2}}_2\norm{u^\frac{p-m+1}{2}}_q\norm{\nabla v}_{q'},
\end{split}
\end{equation*}
where \(q=\frac{(p+m-1)}{p-m+1}\cdot\frac{2n}{n-2}\) and \(q'\) is such that \(\frac{1}{q}+\frac{1}{q'}=\frac{1}{2}.\) 
By Sobolev embedding, it follows that 

\begin{align*}
\norm{u^\frac{p-m+1}{2}}_q&=\norm{u^\frac{p+m-1}{2}}_{\frac{2n}{n-2}}^\frac{p-m+1}{p+m-1}\\
&\le B(\norm{\nabla u^\frac{p+m-1}{2}}_2^\frac{p-m+1}{p+m-1}+\norm{u^\frac{p+m-1}{2}}_2^\frac{p-m+1}{p+m-1})\\
&\le B(\norm{\nabla u^\frac{p+m-1}{2}}_2^\frac{p-m+1}{p+m-1}\\
&\quad+C_7(\norm{\nabla u^\frac{p+m-1}{2}}_2^{\frac{p-m+1}{p+m-1}\theta}\norm{u}_1^{\frac{p-m+1}{2}(1-\theta)}+\norm{u}_1^{\frac{p-m+1}{2}}))\\
&\le2B\norm{\nabla u^\frac{p+m-1}{2}}_2^{\frac{2p}{p+m-1}-1}+C_8,
\end{align*}
where \(C_7>0\), \(C_8=C_8(p)>0\), and \(\theta=\frac{p+m-2}{p+m-2+2/n}\in(0,1)\).
From Lemma~\ref{lem1a} and Lemma~\ref{lem1b}, we can show that \(s\partial_s v\) is bounded in \((0,R)\times(0,T_{max})\), which induces that
\(\norm{\nabla v(\cdot,t)}_{q'}\) is bounded in \((0,T_{max})\) due to the fact that 
\[
\frac{1}{q'}=\frac{1}{2}-\frac{1}{q}=\frac{1}{2}-\frac{p-m+1}{p+m-1}\cdot\frac{n-2}{2n}>\frac{1}{2}-\frac{n-2}{2n}=\frac{1}{n}.
\]
Thus, noticing that \(\frac{2p}{p+m-1}<2\), we obtain through Young's inequality that
\begin{equation}\label{eqn1f_6}
p(p-1)\norm{ u^{p-1}\nabla u\cdot \nabla v}_1\le \frac{2k_Dp(p-1)}{(p+m-1)^2}\norm{\nabla u^\frac{p+m-1}{2}}_2^2+C_9
\end{equation}
 with some \(C_9=C_9(p)>0\).
 Hence, combining \eqref{eqn1f_5} and \eqref{eqn1f_6} into \eqref{eqn1f_4}, we see that 
 \begin{equation}\label{eqn1f_7}
\frac{d}{dt}\norm{u(\cdot,t)}_p^p+\frac{2k_Dp(p-1)}{(p+m-1)^2}\norm{\nabla u^\frac{p+m-1}{2}}_2^2
\le C_9 \quad\text{for all }\,t\in(0,T_{max}).
 \end{equation}
In addition, the Gagliardo-Nirenberg inequality, Young's inequality, and \eqref{prop_u} yield that for any \(\mu>0\) and \( m\ge1\), we can find \(C(p,\mu)>0\) such that
\[\norm{u^\frac{p}{2}}_2^2\le\mu\norm{\nabla u^\frac{p+m-1}{2}}_2^2+C(p,\mu).\]
Therefore, writing \(y(t):=\norm{u^\frac{p}{2}(\cdot,t)}_2^2\), both \eqref{eqn1f_3} and \eqref{eqn1f_7} can be rewritten by
\[y'(t)+y(t)\le C(p),\]
which induces \eqref{eqn1f_1}.
\end{proof}
\begin{proof}[\textit{Proof of Theorem~\ref{thm1}}]\quad
Note that the standard elliptic regularity theory and Lemma~\ref{lem1f} with \(p>n\) show \(\norm{\nabla v(\cdot,t)}_\infty\) is bounded uniformly in time. Thus, as in \eqref{eqn1f_4} and \eqref{eqn1f_5}, we have for some \(C>0\)
\begin{align*}
\frac{d}{dt}\norm{u(\cdot,t)}_p^p&+\frac{4k_Dp(p-1)}{(p+m-1)^2}\int_\Omega |\nabla u^\frac{p+m-1}{2}|^2 dx\\
&=-\frac{2p(p-1)}{p+m-1}\int_\Omega u^\frac{p-m+1}{2}\nabla u^\frac{p+m-1}{2}\cdot \nabla v dx \\
&\le \frac{2k_Dp(p-1)}{(p+m-1)^2}\int_\Omega |\nabla u^\frac{p+m-1}{2}|^2 dx+Cp(p-1)\int_\Omega u^{p-m+1} dx
\end{align*}
for all \(p>\max\left\{1,m-1\right\}\). Now, employing a standard iteration method (see e.g., \cite{alikakos1979lp, tao_boundedness_2012}), we can show that \(\norm{u(\cdot,t)}_\infty\) is uniformly bounded in \((0,T_{max})\), which proves Theorem~\ref{thm1} with the help of blow-up criteria in Proposition~\ref{prop_local}. 
\end{proof}

\section{Global boundedness for \(0<m<1\)}

\begin{lem}\label{lem2a}
Let $(u,v)$ be a solution given in Proposition~\ref{prop_local}. Assume that \(D\) satisfies \eqref{assume_D2}.
For any \(\gamma\in(0,\frac{2}{2-m})\), there exists \(M_*=M_*(\gamma,u_0)>0\) such that if \(M\le M_*\), then $U$ defined in \eqref{AUG25}
satisfies 
\begin{equation}\label{eqn2a_1}
U(s,t)\le W(s):=\eta s^{n-\gamma}\quad\text{ for all }(s,t)\in(0,R)\times(0,T_{max}), 
\end{equation}
where \(\eta=\eta(u_0,\gamma):=\sup_{r\in(0,R)}r^{-n+\gamma}\int_0^r \rho^{n-1}u_0(\rho) d\rho\).
\end{lem}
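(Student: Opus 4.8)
The plan is to establish \eqref{eqn2a_1} by a maximum‑principle comparison for the scalar equation satisfied by $U$, in the spirit of Lemma~\ref{lem1d}, but estimating the chemotactic drift through $U$ itself. First I would derive a self‑contained differential inequality for $U$: integrating \eqref{eqn_vr} over $(0,s)$ and using $v\le M$ from \eqref{prop_v} gives the pointwise bound $\partial_s v(s,t)\le M s^{1-n}U(s,t)$ on $(0,R)\times(0,T_{max})$ (this is exactly the estimate used in \eqref{eqn1e_1}); inserting it, together with $\partial_s U\ge 0$, into the identity \eqref{eqn1c_2} yields
\[
\partial_t U \ \le\ \mathcal{M}(U):= s^{n-1}D\big(s^{1-n}\partial_s U\big)\,\partial_s\big(s^{1-n}\partial_s U\big) + M\,s^{1-n}U\,\partial_s U \qquad\text{on }(0,R)\times(0,T_{max}).
\]
Unlike \eqref{eqn1c_1}, the drift term $M s^{1-n}U\,\partial_s U=\tfrac{M}{2}\,s^{1-n}\partial_s(U^2)$ is genuinely nonlinear in $U$ and its coefficient is singular at $s=0$.

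Next I would verify that a pure power is a supersolution of this inequality once $M$ is small. With $D(\xi)=(1+\xi)^{m-1}$ one computes, for $W(s)=\eta s^{n-\gamma}$,
\[
\mathcal{M}(W)(s)=\eta(n-\gamma)\,s^{\,n-\gamma-2}\Big[-\gamma\big(1+\eta(n-\gamma)s^{-\gamma}\big)^{m-1}+M\eta\,s^{\,2-\gamma}\Big],
\]
so $\mathcal{M}(W)\le 0$ on $(0,R)$ iff $M\eta s^{2-\gamma}\le\gamma\big(1+\eta(n-\gamma)s^{-\gamma}\big)^{m-1}$ there. This is where the hypothesis $\gamma<\frac{2}{2-m}$ is sharp: as $s\to 0^+$ the right‑hand side is of order $s^{\gamma(1-m)}$ and the left‑hand side of order $s^{2-\gamma}$, with $\gamma(1-m)-(2-\gamma)=\gamma(2-m)-2<0$, so the inequality holds automatically near the origin for any $M>0$; consequently $s\mapsto \gamma\eta^{-1}s^{\gamma-2}\big(1+\eta(n-\gamma)s^{-\gamma}\big)^{m-1}$ is continuous and positive on $(0,R]$ and tends to $+\infty$ at $0$, hence has a positive infimum. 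I would then choose $M_*$ — depending only on $\gamma$ and $u_0$ — small enough (e.g.\ half the above infimum, computed with $\eta$ replaced by $\eta+1$) that $\mathcal{M}\big((\eta+\delta)s^{n-\gamma}\big)<0$ strictly on $(0,R)$ for every $M\le M_*$ and every $\delta\in(0,1]$.

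The comparison step itself would proceed as follows. Fix $\delta\in(0,1]$ and set $W_\delta:=(\eta+\delta)s^{n-\gamma}$. On the parabolic boundary $W_\delta$ strictly dominates $U$: $W_\delta(0)=0=U(0,\cdot)$, $W_\delta(R)>\eta R^{n-\gamma}\ge L=U(R,\cdot)$ by \eqref{prop_u}, and $W_\delta(s)-U(s,0)\ge\delta s^{n-\gamma}>0$ directly from the definition of $\eta$. Moreover $U(s,t)\le\tfrac1n\|u(\cdot,t)\|_\infty s^n$ and $n-\gamma<n$ force $W_\delta-U>0$ in a neighbourhood of $s=0$ for each fixed $t<T_{max}$, so the first contact of $U$ with $W_\delta$ must occur at an interior point $(s_0,t_0)$, $s_0\in(0,R)$, $t_0\in(0,T_{max})$. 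There one has $\partial_s U=\partial_s W_\delta$, $\partial_s^2 U\le\partial_s^2 W_\delta$, $\partial_t U\ge 0$ and, crucially, $U=W_\delta$, so the nonlinear drift term contributes identically to $\mathcal M(U)$ and $\mathcal M(W_\delta)$; using that $\mathcal M$ is increasing in $\partial_s^2 U$ (with positive coefficient $D$) gives $0\le\partial_t U(s_0,t_0)\le\mathcal M(U)(s_0,t_0)\le\mathcal M(W_\delta)(s_0)<0$, a contradiction. Hence $U\le W_\delta$ everywhere, and letting $\delta\downarrow0$ yields \eqref{eqn2a_1}.

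The main obstacle is the structure highlighted already in the first step: the chemotactic term in the inequality for $U$ is nonlinear in $U$ with a coefficient singular at the origin. This means that the Lemma~\ref{lem1b}‑type estimate $\partial_s v\lesssim s^{-1}$ used in Lemma~\ref{lem1d} is of no use — with it, the power $\eta s^{n-\gamma}$ fails to be a supersolution near $s=0$, precisely because $D(\xi)\to0$ as $\xi\to\infty$ — so one is forced to use the sharper bound $\partial_s v\le M s^{1-n}U$ and, in turn, to run the comparison with the contact threshold $0$ (so that $U=W$ exactly at the contact point) rather than with the shifted threshold employed for \eqref{eqn1d_1}. Carrying out the power‑counting in $\mathcal M(W)$ correctly, which is what singles out the range $\gamma<\frac{2}{2-m}$, is the technical heart of the argument.
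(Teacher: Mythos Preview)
Your proposal is correct and follows essentially the same route as the paper. Both arguments replace the Lemma~\ref{lem1b} bound by the sharper estimate $\partial_s v\le M s^{1-n}U$ from \eqref{eqn1e_1}, arrive at the same nonlinear differential inequality for $U$ (the paper calls the operator $\mathcal H$), compute $\mathcal H(\eta s^{n-\gamma})$ and read off the smallness threshold on $M$ from the condition $\gamma(2-m)<2$, check the parabolic boundary data via the definition of $\eta$, and conclude by a comparison principle.

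The only noteworthy difference is in how the comparison is executed. The paper simply invokes ``as in the proof of Lemma~\ref{lem1d}'', i.e.\ the $e^{-t}$--weighted $\epsilon$--level argument; however that argument, as written there, exploits that the drift in $\mathcal L$ is \emph{linear} in $\partial_s U$ so that it cancels at the contact point. Your perturbed-barrier contact argument, with $W_\delta=(\eta+\delta)s^{n-\gamma}$ and a first-touching point where $U=W_\delta$ exactly, makes the nonlinear drift $Ms^{1-n}U\partial_s U$ match identically and thereby sidesteps that issue cleanly. This is a genuine (if small) improvement in transparency over the paper's one-line reference.
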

\begin{proof}
We begin by noting that \(\eta>0\) is finite since \(u_0\) satisfies \eqref{assume_initial}.
For fixed \((s,t)\in (0,R)\times(0,T_{max})\),
since \eqref{eqn1c_2} and \eqref{eqn1e_1} are still valid, we have
 \begin{equation*}
 \begin{aligned}
 \partial_t U(s,t)&\le\mathcal{H}(U(s,t))
 \\&:= s^{n-1}(1+s^{-n+1}\partial_sU(s,t))^{m-1}\partial_s(s^{-n+1} \partial_s U(s,t))\\&\quad+M s^{-n+1}U(s,t)\partial_s U(s,t).
 \end{aligned}
 \end{equation*}
We again use the comparison principle, as in the proof of Lemma~\ref{lem1d}; we first observe that  
\begin{align*}
\mathcal{H}(W(s,t))&=-\eta\gamma(n-\gamma) (1+\eta(n-\gamma)s^{-\gamma})^{m-1}s^{n-\gamma-2}
+M\eta^2(n-\gamma) s^{n-2\gamma}\\
&=\eta(n-\gamma)s^{n-2\gamma}\left[M\eta-\gamma(s^\gamma+\eta(n-\gamma))^{m-1}s^{\gamma(2-m)-2}
\right].
\end{align*}
Thus, for any \[M\le M_*:=\frac{\gamma}{\eta(R^\gamma+\eta(n-\gamma))^{1-m}R^{2-\gamma(2-m)}},\] we have \(\mathcal{H}(W(s,t))\le 0.\)
Furthermore, it follows from the definition of \(\gamma\) that for all \(t\in(0,T_{max})\) and \(r\in(0,R)\),
\[U(R,t)\le W(R)\,\text{ and }\, U(r,0)\le W(r),\,\text{ as well as }\,U(0,t)=0=W(0).\]
Consequently, as in the proof of Lemma~\ref{lem1d}, we can conclude \eqref{eqn2a_1}. 
\end{proof}
We now proceed to the standard \(L^q\)- estimates for \(u\). As a preliminary step, we introduce a simplified version of the Hardy inequality on balls. 
\begin{lem}\label{lem2b}
For any radially symmetric function \(f\in (H^1\cap L^l)(B_R)\), \(l>0\), there exists \(C>0\) such that
\begin{equation}\label{eqn2b_1}
\int_{B_R}\frac{(f(x))^2}{|x|^2}dx\le C(\norm{\nabla f}_2^2+ \norm{f}_l^2).
\end{equation}
\end{lem}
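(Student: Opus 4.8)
The plan is to pass to the radial profile and dispose of the (non‑singular but genuinely delicate) contribution near the origin by a truncation argument in which the inner boundary term carries a favourable sign. Since $f$ is radial, writing $f(x)=f(|x|)$ we have $\int_{B_R}\frac{f(x)^2}{|x|^2}\,dx=\sigma_n\int_0^R s^{n-3}f(s)^2\,ds$, while $\norm{\nabla f}_2^2=\sigma_n\int_0^R s^{n-1}(f'(s))^2\,ds$ and $\norm{f}_l^l=\sigma_n\int_0^R s^{n-1}|f(s)|^l\,ds$. As $f\in H^1(B_R)$ and $n\ge3$, on every annulus $\{\epsilon\le|x|\le R\}$ the weight $s^{n-1}$ is bounded between positive constants, so the radial profile lies in $H^1(\epsilon,R)$ and therefore has an absolutely continuous representative on $(0,R]$; we work with it, so that the value $f(R)$ and the fundamental theorem of calculus on subintervals of $(0,R]$ are available.

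First I would integrate by parts on $(\epsilon,R)$ for $\epsilon\in(0,R)$, using $(n-2)s^{n-3}=\frac{d}{ds}(s^{n-2})$:
\[
\int_\epsilon^R s^{n-3}f^2\,ds=\frac{R^{n-2}}{n-2}f(R)^2-\frac{\epsilon^{n-2}}{n-2}f(\epsilon)^2-\frac{2}{n-2}\int_\epsilon^R s^{n-2}ff'\,ds .
\]
The inner boundary term $-\tfrac{\epsilon^{n-2}}{n-2}f(\epsilon)^2$ is $\le0$ and is simply discarded — this is exactly what lets us avoid analysing $f$ at the origin, where it need not be bounded. Splitting $s^{n-2}|f||f'|=(s^{(n-3)/2}|f|)(s^{(n-1)/2}|f'|)$, Cauchy–Schwarz bounds the last integral by $\big(\int_\epsilon^R s^{n-3}f^2\big)^{1/2}\big(\int_0^R s^{n-1}(f')^2\big)^{1/2}$, and Young's inequality absorbs half of $\int_\epsilon^R s^{n-3}f^2$ into the left‑hand side. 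Thus
\[
\int_\epsilon^R s^{n-3}f^2\,ds\le \frac{2R^{n-2}}{n-2}f(R)^2+\frac{4}{(n-2)^2}\int_0^R s^{n-1}(f')^2\,ds
\]
uniformly in $\epsilon$, and letting $\epsilon\downarrow0$ (monotone convergence) gives the same bound for $\int_0^R s^{n-3}f^2\,ds$, in particular showing it is finite.

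It remains to estimate the trace value $f(R)^2$ by $\norm{\nabla f}_2^2+\norm{f}_l^2$. Since $s\mapsto|f(s)|^l$ is continuous on $[R/2,R]$, pick $s_0\in[R/2,R]$ attaining its minimum; then $|f(s_0)|^l\le\frac2R\int_{R/2}^R|f|^l\,ds\le C\,\norm{f}_l^l$ because $s^{n-1}$ is bounded below there, so $|f(s_0)|\le C\norm{f}_l$. Writing $f(R)=f(s_0)+\int_{s_0}^R f'\,ds$ and estimating $\int_{R/2}^R|f'|\,ds$ by Cauchy–Schwarz against $\int_{R/2}^R s^{1-n}\,ds$ yields $|f(R)|\le C\norm{f}_l+C'\norm{\nabla f}_2$, hence $f(R)^2\le C(\norm{\nabla f}_2^2+\norm{f}_l^2)$. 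Combining the two displays proves \eqref{eqn2b_1} with a constant $C=C(n,R,l)$. The only real subtlety is the possible unboundedness of $f$ near $0$ (and the fact that $\int_0^R s^{n-3}f^2$ is not \emph{a priori} finite), which is precisely handled by discarding the sign‑definite boundary term at $\epsilon$; the restriction $n\ge3$ enters here, since for $n=2$ this weight produces a logarithm and the inequality fails. All remaining steps are Cauchy–Schwarz and Young.
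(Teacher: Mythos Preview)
Your argument is correct and follows the same skeleton as the paper: integrate by parts against $s^{n-2}$, absorb half of $\int s^{n-3}f^2$ via Young, and then control the boundary value $f(R)^2$. Two points of genuine difference are worth noting. First, you work on $(\epsilon,R)$ and discard the sign-definite inner boundary term before letting $\epsilon\downarrow0$; the paper writes the integration by parts directly on $(0,R)$, which tacitly assumes the left-hand side is finite and that no contribution arises at $s=0$ --- your truncation is the cleaner justification. Second, for the trace term the paper invokes the abstract embedding $W^{1,2}(\Omega)\hookrightarrow L^2(\partial\Omega)$ followed by Gagliardo--Nirenberg interpolation between $\norm{\nabla f}_2$ and $\norm{f}_l$, whereas you pick a near-minimiser $s_0\in[R/2,R]$ of $|f|^l$ and use the fundamental theorem of calculus from $s_0$ to $R$. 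Your route is more elementary and entirely self-contained (no black-box trace or interpolation theorems), at the cost of being one-dimensional and radial-specific; the paper's route would generalise more readily to non-radial settings but is heavier machinery for this lemma.
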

\begin{proof}
By applying integration by parts and Young's inequality, we can derive 
\begin{equation}\label{eqn2b_2}
\begin{aligned}
\int_0^R s^{n-3}f^2(s)ds&=-\frac{2}{n-2}\int_0^R s^{n-2}f(s)\partial_s f(s) ds+\frac{1}{n-2}R^{n-2}f^2(R)\\
&\le \frac{1}{2}\int_0^R s^{n-3}f^2(s)ds+C_1\int_0^R s^{n-1}(\partial_s f(s))^2 ds\\&\quad+\frac{1}{(n-2)R}\int_{\partial B_R} f^2(x)dS
\end{aligned}
\end{equation}
with some \(C_1>0\). Note that it is enough to consider the case \(l<2\). By utilizing the standard trace embedding \(W^{1,2}(\Omega)\hookrightarrow L^2(\partial\Omega)\) and subsequently applying the Gagliardo-Nirenberg inequality along with Young's inequality, we obtain
\begin{equation}\label{eqn2b_3}
\begin{aligned}
\norm{f}_{L^2(\partial B_R)}^2&\le C_2(\norm{\nabla f}_2^2+\norm{f}_2^2)\\
&\le C_3(\norm{\nabla f}_2^2+\norm{\nabla f}_2^{2\theta}\norm{f}_l^{2(1-\theta)}+\norm{f}_l^2)\\
&\le C_4(\norm{\nabla f}_2^2+\norm{f}_l^2),
\end{aligned}
\end{equation}
where \(C_i>0\), \(i=2,3,4\), and \(\theta=\frac{\frac{1}{l}-\frac{1}{2}}{\frac{1}{l}-\frac{1}{2}+\frac{1}{n}}\in(0,1)\). 
In consequence, by employing \eqref{eqn2b_2} and \eqref{eqn2b_3} together, we have \eqref{eqn2b_1}.
\end{proof}

\begin{lem}\label{lem2c}
Let $(u,v)$ be a solution given in Proposition~\ref{prop_local}.
Under the same assumptions as in Lemma~\ref{lem2a}, if $M\le M_*$, where  \(M_*\) is defined in Lemma~\ref{lem2a},  then
there exist $q$ with $\displaystyle q>\frac{(n-1)(2-m)}{m}+1-m$ and \(C=C(q)>0\)
such that
\begin{equation}\label{eqn2c_1}
\sup_{t\in(0,T_{max})}\norm{u(\cdot,t)}_q\le C.
\end{equation}
\end{lem}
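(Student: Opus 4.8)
The plan is to run a standard $L^q$-energy inequality for $u$ and absorb the chemotaxis contribution using the pointwise bound $U(s,t)\le \eta s^{n-\gamma}$ from Lemma~\ref{lem2a}, together with the Hardy-type inequality of Lemma~\ref{lem2b} and a one-dimensional Gagliardo–Nirenberg interpolation. First I would fix $q$ large (to be constrained below) and test the $u$-equation with $u^{q-1}$; since $D(\xi)=(1+\xi)^{m-1}\ge(1+\xi)^{m-1}$ one gets, after the usual manipulations,
\begin{equation*}
\frac{d}{dt}\|u\|_q^q+\frac{4q(q-1)}{(q+m-1)^2}\,\|\nabla u^{\frac{q+m-1}{2}}\|_2^2\le \frac{2q(q-1)}{q+m-1}\int_\Omega u^{\frac{q-m+1}{2}}\,\nabla u^{\frac{q+m-1}{2}}\!\cdot\nabla v\,dx=:J.
\end{equation*}
The point of departure from the $m\ge1$ case is that here $\nabla v$ is not bounded, so the term $J$ must be estimated using only $|\partial_s v(s,t)|\le M s^{1-n}U(s,t)\le M\eta s^{1-\gamma}$, which follows from \eqref{eqn1e_1} and \eqref{eqn2a_1}. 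Using $s^{-\gamma}\le s^{-1}\cdot s^{1-\gamma}$ is not quite the right move; instead I would split according to whether $\gamma\le1$ or $\gamma>1$, but in either case the key is that $s^{1-\gamma}$ with $\gamma\in(0,\tfrac{2}{2-m})$ is integrable against the relevant powers and, crucially, $\gamma<2$ so that $|\nabla v|\lesssim s^{-1}\cdot s^{2-\gamma}$ has a factor $|x|^{-1}$ to be absorbed by Hardy and a vanishing factor $s^{2-\gamma}$ to localize near the origin.

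Concretely, I would write $J\le C\|\nabla u^{\frac{q+m-1}{2}}\|_2\cdot\big\|\,u^{\frac{q-m+1}{2}}\,|\nabla v|\,\big\|_2$ by Cauchy–Schwarz, and then bound the second factor by Hölder with the split $|\nabla v|\le M\eta\,|x|^{1-\gamma}$: choosing exponents so that $u^{\frac{q-m+1}{2}}$ lands in $L^{\frac{2n}{n-2}}$ (to use Sobolev on $u^{\frac{q+m-1}{2}}$) and the weight $|x|^{2(1-\gamma)}$ lands in the conjugate Lebesgue space, which requires $2(1-\gamma)\cdot\frac{n}{2}>-n$, i.e. $\gamma<2$ — automatic. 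Alternatively, and more in the spirit of Lemma~\ref{lem2b}, write $u^{\frac{q-m+1}{2}}|\nabla v|^2\le (M\eta)^2 u^{\frac{q-m+1}{2}}|x|^{2(1-\gamma)}$ and, since $2(1-\gamma)>-2$, dominate $|x|^{2(1-\gamma)}\le R^{2(1-\gamma)+2}|x|^{-2}$ when $\gamma>1$ (and it is simply bounded when $\gamma\le1$), so the Hardy inequality \eqref{eqn2b_1} applied to $f=u^{\frac{q-m+1}{4}}$-type powers converts the singular weight into $\|\nabla u^{\ast}\|_2^2+\|u^\ast\|_l^2$ for a suitable low exponent $l$. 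The Gagliardo–Nirenberg inequality (in the one-dimensional radial form, or the standard $n$-dimensional one) then interpolates $\|u^{\frac{q+m-1}{2}}\|_2$ and the lower-order norms against $\|\nabla u^{\frac{q+m-1}{2}}\|_2$ with a subquadratic power, using the mass conservation \eqref{prop_u} to control $\|u\|_1$; Young's inequality absorbs the top-order term into the dissipation. This yields $\frac{d}{dt}\|u\|_q^q+\|u\|_q^q\le C$, hence \eqref{eqn2c_1}.

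The constraint $q>\frac{(n-1)(2-m)}{m}+1-m$ is exactly what makes the interpolation exponent $\theta$ strictly less than the threshold forcing a quadratic (non-absorbable) power of $\|\nabla u^{\frac{q+m-1}{2}}\|_2$; I would verify this by computing $\theta$ from the scaling relation in the Gagliardo–Nirenberg step and checking that the resulting exponent of $\|\nabla u^{\frac{q+m-1}{2}}\|_2$ is $<2$ precisely under this lower bound on $q$. The main obstacle is bookkeeping the exponents in the Hölder/Sobolev/Hardy chain so that (i) the weight $|x|^{2(1-\gamma)}$ is handled uniformly for all admissible $\gamma\in(0,\tfrac{2}{2-m})$, and (ii) the low-order norm produced by Hardy is an $L^l$ norm with $l$ small enough to be interpolable down to the conserved $L^1$ mass — this is where the precise range of $\gamma$ (and the attendant $M_*$) feeds in, and getting a clean single inequality valid across the whole parameter window is the delicate point; everything else is routine testing-function computation.
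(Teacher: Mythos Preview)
Your scheme does not close when $m<1$. After Cauchy--Schwarz you must control $\|u^{\frac{q-m+1}{2}}|\nabla v|\|_2$, and placing $u^{\frac{q-m+1}{2}}$ into the Sobolev scale of $w:=u^{\frac{q+m-1}{2}}$ yields at best $\|w\|_{\frac{2n}{n-2}}^{\frac{q-m+1}{q+m-1}}$, so that
\[
J\ \lesssim\ \|\nabla w\|_2^{\,1+\frac{q-m+1}{q+m-1}}\ =\ \|\nabla w\|_2^{\frac{2q}{q+m-1}},
\]
and $\tfrac{2q}{q+m-1}>2$ precisely because $m<1$; the weight $|x|^{1-\gamma}$ and the choice of $q$ are irrelevant here. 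Interpolating $\|w\|_{\frac{2n}{n-2}}$ against $\|u\|_1$ via the $n$-dimensional Gagliardo--Nirenberg inequality does not help, since at the critical exponent the interpolation parameter is $\theta=1$. Your Hardy alternative with $f=u^{\frac{q-m+1}{2}}$ has the same defect in disguise: $\|\nabla f\|_2^2\sim\int u^{q-m-1}|\nabla u|^2$ carries a \emph{higher} power of $u$ than the available dissipation $\int (1+u)^{m-1}u^{q-2}|\nabla u|^2\sim\int u^{q+m-3}|\nabla u|^2$ (note $q-m-1>q+m-3$ for $m<1$), so it cannot be absorbed. A scaling count confirms the obstruction: with only $\|u\|_1$ at the bottom, the estimate $\int u^{q-m+1}|x|^{2(1-\gamma)}\lesssim\|\nabla w\|_2^{2a}$ requires $a<1\Leftrightarrow\gamma<n(m-1)+2$, which is impossible once $m\le 1-\tfrac{2}{n}$. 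The lower bound on $q$ in the statement plays no role in your chain.

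The paper proceeds differently in two essential respects. First, it does not estimate $\nabla v$ at all: an integration by parts using $\Delta v=uv$ and $\partial_\nu v|_{\partial\Omega}\ge0$ replaces the chemotaxis term by $M\int_\Omega(u+1)^{q+1}$. Second, this integral is controlled by a \emph{one}-dimensional Gagliardo--Nirenberg inequality applied to $z(s):=s^{(n-1)/2}(u+1)^{\frac{q+m-1}{2}}$, with the low norm
\[
\|z\|_{L^{2/(q+m-1)}(0,R)}=\Bigl(\int_0^R s^{\frac{n-1}{q+m-1}}(u+1)\,ds\Bigr)^{\frac{q+m-1}{2}},
\]
a \emph{weighted} $L^1$ of $u$ rather than the conserved mass. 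Lemma~\ref{lem2a} is used exactly here: one more integration by parts turns this into $\int_0^R s^{\frac{n-1}{q+m-1}-n}U\,ds$, and inserting $U\le\eta s^{n-\gamma}$ gives a finite bound provided $q<\tfrac{n-1}{(\gamma-1)_+}+1-m$. Lemma~\ref{lem2b} is applied with $f=(u+1)^{\frac{q+m-1}{2}}$ (not a lower power) to control $\|\partial_s z\|_{L^2(0,R)}$ by $\|\nabla w\|_2$. The subquadratic condition in the 1D interpolation reduces to $\tfrac{1}{k}>1-m$ for an auxiliary H\"older exponent $k$, while integrability of the residual weight forces $\tfrac{1}{k}<1-\tfrac{(n-1)(2-m)}{q+m-1}$; compatibility of these two is exactly the lower bound $q>\tfrac{(n-1)(2-m)}{m}+1-m$. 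Thus Lemma~\ref{lem2a} controls a weighted mass of $u$, not $\nabla v$, and the passage to one dimension is what makes the exponent close --- this is not a bookkeeping issue but the core idea.
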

\begin{proof}
For given \(\gamma\in(0,\frac{2}{2-m})\), we pick \(q>1\) such that 
\begin{equation}\label{eqn2c_2}
\frac{(n-1)(2-m)}{m}+1-m<q<\frac{n-1}{(\gamma-1)_+} +1-m.
\end{equation}
Then, we choose \(k>1\) to satisfy
\begin{equation}\label{eqn2c_3}
1-m<\frac{1}{k}<1-\frac{(n-1)(2-m)}{q+m-1}.
\end{equation}
For fixed \(t\in(0,T_{max})\), we multiply the first equation in \eqref{main} by \(q(u+1)^{q-1}\) and integrate by parts, then add \(\int_\Omega (u+1)^q dx\) to both sides to obtain
\begin{equation}\label{eqn2c_4}
\begin{aligned}
\frac{d}{dt}\int_\Omega (u+1)^q dx+&\int_\Omega(u+1)^q dx+\frac{4q(q-1)}{(m+q-1)^2}\int_\Omega |\nabla(u+1)^\frac{q+m-1}{2}|^2 dx\\
&=-q(q-1)\int_\Omega (u+1)^{q-2}u\nabla u\cdot\nabla v dx+\int_\Omega (u+1)^q dx.
\end{aligned}
\end{equation}
Define a nonnegative function \(F(\xi)=\int_0^\xi(\zeta+1)^{q-2}\zeta d\zeta.\) Given the nonnegativity of \(\partial_\nu v|_{\partial\Omega}\) and the increasing nature of the function \([0,\infty)\ni\zeta\mapsto(\zeta+1)^{q-2}\zeta\), it follows from integration by parts again, uniform boundedness of \(v\) as established in \eqref{prop_v}, and the Sobolev embedding that
\begin{equation}\label{eqn2c_4_1}
\begin{aligned}
-\int_\Omega (u+1)^{q-2}u\nabla u\cdot \nabla v dx&=\int_\Omega F(u)uv dx-\int_{\partial\Omega}F(u)\partial_\nu v dS\\
&\le M\int_\Omega (u+1)^{q-2}u^{3}dx\\
&\le M\sigma_n\int_0^R s^{n-1}(u(s,t)+1)^{q+1} ds.
\end{aligned}
\end{equation}
For convenience, we define \(w(s,t):=(u(s,t)+1)^\frac{q+m-1}{2}\) and \(z(s,t):=s^\frac{n-1}{2}w(s,t)\). By combining \eqref{eqn2c_4} and \eqref{eqn2c_4_1}, and then applying Young's inequality and H\"older inequality, we obtain
\begin{equation}\label{eqn2c_4_2}
\begin{aligned}
\frac{d}{dt}\norm{(u+1)}_q^q&+\norm{(u+1)}_q^q+\norm{\nabla w}_2^2\\
&\le C_5 \int_0^R s^{-\frac{(n-1)(2-m)}{q+m-1}}(z(s,t))^\frac{2(q+1)}{q+m-1}ds+C_5\\
&\le C_5\norm{z}_{L^\frac{2(q+1)k}{q+m-1}((0,R))}^\frac{2(q+1)}{q+m-1}\left(\int_0^R s^{-\frac{(n-1)(2-m)}{q+m-1}\cdot\frac{k}{k-1}} ds\right)^\frac{k-1}{k}+C_5
\end{aligned}
\end{equation}
with some \(C_5>0\), where \(k\) is given by \eqref{eqn2c_3}.
Since
\[\frac{(n-1)(2-m)}{q+m-1}\cdot\frac{k}{k-1}<1\]
due to \eqref{eqn2c_3}, it follows that
\begin{equation}\label{eqn2c_6_1}
\bigr{(}\int_0^R s^{-\frac{(n-1)(2-m)}{q+m-1}\cdot\frac{k}{k-1}} ds\bigr{)}^\frac{k-1}{k}\le C_6,
\end{equation}
where \(C_6>0\). Next, using a one-dimensional version of the Gagliardo-Nirenberg inequality, we observe that
\begin{equation}\label{eqn2c_5}
\norm{z}_{L^\frac{2(q+1)k}{q+m-1}((0,R))}\le C_7 \norm{\partial_s z}_{L^2((0,R))}^\theta\norm{z}_{L^\frac{2}{q+m-1}((0,R))}^{1-\theta}+\norm{z}_{L^\frac{2}{q+m-1}((0,R))}
\end{equation}
with some \(C_7>0\), where \[\theta=\frac{q+m-1-\frac{q+m-1}{(q+1)k}}{q+m}\in(0,1).\]
We note that 
\begin{equation*}
\frac{2(q+1)}{q+m-1}\theta=2\left(1-\frac{\frac{1}{k}+m-1}{q+m}\right)<2
\end{equation*}
due to \eqref{eqn2c_3}.
In addition, we have
\begin{equation*}
\norm{\partial_s z}_{L^2((0,R))}^2\le\int_0^R s^{
n-1}(\partial_sw(s,t))^2ds+\frac{(n-1)^2}{4}\int_0^R s^{n-3}w^2(s,t) ds,
\end{equation*}
which, with an application of Lemma~\ref{lem2b} with \(f(r)=w(r,t)\) and \(l=\frac{2}{q+m-1}\), entails  that 
\begin{equation}\label{eqn2c_6}
\norm{\partial_s z}_{L^2((0,R))}\le C_8(\norm{\nabla w}_2+1)
\end{equation}
with some \(C_8>0\).
Moreover, we use integration by parts to estimate
\begin{equation}\label{eqn2c_7}
\begin{aligned}
\norm{z}_{L^\frac{2}{q+m-1}((0,R))}^\frac{2}{q+m-1}&=\int_0^R s^\frac{n-1}{q+m-1} (u(s,t)+1)ds\\
&=\int_0^R s^{-n+1+\frac{n-1}{q+m-1}}\partial_s U(s,t)ds+R\\
&=C_9(\int_0^R s^{-n+\frac{n-1}{q+m-1}} U(s,t)ds+ 1),
\end{aligned}
\end{equation}
 where  \(C_9>0\). 
 Since \(\frac{n-1}{q+m-1}-\gamma>-1\) by \eqref{eqn2c_2}, applying \eqref{eqn2a_1} to \eqref{eqn2c_7} allows us to deduce for some \(C_{10}>0\)
\begin{equation}\label{eqn2c_8}
\norm{z}_{L^\frac{2}{q+m-1}((0,R))}^\frac{2}{q+m-1}\le C_9\eta\int_0^R s^{\frac{n-1}{q+m-1}-\gamma} ds+C_9\le C_{10},
\end{equation}
where \(\eta\) is a number defined in Lemma~\ref{lem2a}. Therefore, by combining \eqref{eqn2c_6} and \eqref{eqn2c_8} with \eqref{eqn2c_5}, we infer that
\begin{equation}\label{eqn2c_9}
\norm{z}_{L^\frac{2(q+1)k}{q+m-1}((0,R))}^\frac{2(q+1)}{q+m-1}\le C_{11} (\norm{\nabla w}_2^{\frac{2(q+1)}{q+m-1}\theta}+1),
\end{equation}
where \(C_{11}>0\). Consequently, by plugging \eqref{eqn2c_6_1} and  \eqref{eqn2c_9} into \eqref{eqn2c_4_2}, we can choose \(C_{12}>0\) using Young's inequality that 
\begin{align*}
\frac{d}{dt}\norm{(u+1)}_q^q+\norm{(u+1)}_q^q+\norm{\nabla w}_2^2&\le C_5 C_6 C_{11} (\norm{\nabla w}_2^{\frac{2(q+1)}{q+m-1}\theta}+1)+C_5\\
&\le\frac{1}{2}\norm{\nabla w}_2^2+C_{12}.
\end{align*}
Therefore, the function \(y(t):=\norm{(u(\cdot,t)+1)}_q^q\) satisfies 
\[y'(t)+y(t)\le C(q),\]
thereby leading to the result in \eqref{eqn2c_1}.
\end{proof}
\begin{proof}[\textit{Proof of Theorem~\ref{thm2}}]\quad
Let \(q_0=q\) satisfy \eqref{eqn2c_2}. By Lemma~\ref{lem2c}, there exists \(C>0\) such that \( \norm{u(\cdot,t)}_{q_0}\le C\) for all \(t\in(0,T_{max})\).
Let \(p\) be such that \(p\ge\max\left\{n-1,q_0-1\right\}\).
For fixed \(t\in(0,T_{max})\), we employ again \eqref{eqn2c_4} and \eqref{eqn2c_4_1} to obtain, for some \(C>0\),
\begin{equation}\label{proof_inq1}
\frac{d}{dt}\norm{(u+1)}_p^p+\norm{(u+1)}_p^p+\norm{\nabla(u+1)^\frac{p+m-1}{2}}_2^2
\le C(\norm{(u+1)}_{p+1}^{p+1}+1).
\end{equation}
By the Gagliardo-Nirenberg inequality, we see that for some \(C>0\)
\begin{equation}\label{proof_inq2}
\begin{aligned}
\norm{(u+1)}_{p+1}^{p+1}&=\norm{(u+1)^\frac{p+m-1}{2}}_\frac{2(p+1)}{p+m-1}^\frac{2(p+1)}{p+m-1}\\
&\le C\biggr{(}\norm{\nabla (u+1)^\frac{p+m-1}{2}}_2^{\frac{2(p+1)}{p+m-1}\theta}\norm{(u+1)^\frac{p+m-1}{2}}_\frac{2q_0}{p+m-1}^{\frac{2(p+1)}{p+m-1}(1-\theta)}\\&\quad+\norm{(u+1)^\frac{p+m-1}{2}}_\frac{2q_0}{p+m-1}^{\frac{2(p+1)}{p+m-1}}\biggr{)}\\
&= C(\norm{\nabla (u+1)^\frac{p+m-1}{2}}_2^{\frac{2(p+1)}{p+m-1}\theta}\|u+1\|_{q_{0}}^{(p+1)(1-\theta)}+\|u+1\|_{q_{0}}^{p+1}).
\end{aligned}
\end{equation}
Here, due to the condition on \(p\), we have \(\displaystyle\theta=\frac{\frac{p+m-1}{2q_0}-\frac{p+m-1}{2(p+1)}}{\frac{p+m-1}{n(2-m)}-\frac{1}{2}+\frac{1}{n}}\in(0,1).\)
Furthermore, since \(q_0\) fulfills \(q_0>\frac{(n-1)(2-m)}{m}+1-m>\frac{n}{2}(2-m)\), it follows that
\(
\frac{(p+1)\theta}{p+m-1}<1.
\)
Thus, substituting \eqref{proof_inq2} into \eqref{proof_inq1} and applying Young's inequality, we establish the \(L^p\)-boundedness of \(u\) for any \(p\ge\max\left\{n-1,q_0-1\right\}\).
Hence, similar to the proof of Theorem~\ref{thm1}, we can apply the standard iteration method to achieve the uniform boundedness of \(\norm{u(\cdot,t)}_\infty\) in \((0,T_{max})\).
\end{proof}

\section{Blow-up in a finite or infinite time for  \(0<m<\frac{2}{n}\)}
This section aims to detect the blow-up phenomena that occur due to the competition of diffusive and cross-diffusive dynamics. This will be verified through the analysis of a temporal evolution involving mass accumulation function \(U\).
\subsection{Estimates for \(v\) at the origin}
We begin by addressing the lower bound for \(\nabla \ln v\). For a detailed proof, we refer to \cite[Lemma 3.1]{wang_finite-time_2023}.
\begin{lem}\label{lem3a}
Let $(u,v)$ be a solution given in Proposition~\ref{prop_local} and $U$ be defined in \eqref{AUG25}. It holds that
\begin{equation}\label{eqn3a_1}
\partial_s (\ln v)(s,t)\ge\frac{s^{-n+1} U(s,t)}{1+\int_0^s\rho^{-n+1} U(\rho,t) d\rho}\quad\text{for all }\,(s,t)\in(0,R)\times(0,T_{max}).
\end{equation}
\end{lem}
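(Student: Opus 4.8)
The plan is to derive the estimate by integrating the radial equation \eqref{eqn_vr} and then controlling the resulting quotient by an ODE comparison in the variable $s$. First, I would rewrite \eqref{eqn_vr} in divergence form and integrate over $(0,s)$: since $\partial_\rho(\rho^{n-1}\partial_\rho v)=\rho^{n-1}uv$ and $\partial_\rho v\to 0$ as $\rho\to 0$ (by the radial smoothness of $v$ on $\overline\Omega$ guaranteed in Proposition~\ref{prop_local}), one gets
\[
s^{n-1}\partial_s v(s,t)=\int_0^s\rho^{n-1}u(\rho,t)v(\rho,t)\,d\rho.
\]
Dividing by $v(s,t)$ and using that $v$ is radially increasing in $s$ (Lemma~\ref{lem1a}, so $v(\rho,t)\le v(s,t)$ for $\rho\le s$) would be the wrong direction; instead I want a lower bound, so I would bound $v(\rho,t)\ge v(s,t)\cdot\big(v(\rho,t)/v(s,t)\big)$ and track the ratio $h(s):=v(s,t)/v(s_0,t)$ — but cleaner is to work directly with $w(s):=\ln v(s,t)$ and observe $s^{n-1}\partial_s w=\tfrac{1}{v}\int_0^s\rho^{n-1}uv\,d\rho$.

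The key step is then a differential inequality. Set $P(s,t):=\int_0^s\rho^{-n+1}U(\rho,t)\,d\rho$, so $\partial_s P=s^{-n+1}U$, and let $g(s):=s^{n-1}\partial_s w(s,t)$. Using $v(\rho,t)=v(s,t)\exp\!\big(-\int_\rho^s \partial_\sigma w\,d\sigma\big)$ inside the integral and the fact that $\partial_\sigma w\ge 0$, I would aim to show $g$ satisfies $g'(s)\ge \partial_s(s^{n-1}u) \cdot$ (something) in a way that matches the Riccati-type structure of \eqref{eqn3a_1}. Concretely, differentiating the quotient $Q(s):=g(s)\big/\big(1+P(s,t)\big)$ and using $\partial_s g = s^{n-1}uv/v + s^{n-1}\partial_s w\cdot(\text{correction}) $ together with $\partial_s U=s^{n-1}u$, one expects $Q'(s)\le 0$ fails, so the honest route (the one used in \cite[Lemma 3.1]{wang_finite-time_2023}) is: show the function $s\mapsto g(s)-\big(s^{-n+1}U(s,t)\big)\big(1+P(s,t)\big)/(1+P(s,t))$ — i.e. directly verify $g(s)(1+P(s,t))\ge U(s,t)s^{-n+1}\cdot(1+P(s,t))$ is equivalent to proving $\partial_s\big(\ln v\big)\cdot(1+P)\ge s^{-n+1}U$, which after multiplying through reduces to a first-order linear ODE comparison: let $\psi(s):=\ln\big(1+P(s,t)\big)$; then $\partial_s\psi = \dfrac{s^{-n+1}U}{1+P}$, and one must show $\partial_s w\ge \partial_s\psi$, i.e. $w(s)-\psi(s)$ is nondecreasing. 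Since $w(0)=\ln v(0,t)$ and $\psi(0)=0$, and $\partial_s(w-\psi)=\partial_s w-\dfrac{s^{-n+1}U}{1+P}$, I would close the argument by showing this last quantity is nonnegative via the integral representation $s^{n-1}\partial_s w=\tfrac1v\int_0^s\rho^{n-1}uv\,d\rho\ge \tfrac{v(0,t)}{v(s,t)}U(s,t)\ge\dfrac{U(s,t)}{1+P(s,t)}$, where the final inequality uses $v(s,t)/v(0,t)=\exp(w(s)-w(0))\le 1+\int_0^s\partial_s w\le 1+P(s,t)$ — which is itself what we are proving, so this must be run as a bootstrap/continuity argument in $s$ starting from $s=0$.

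The main obstacle is precisely this circularity: the bound $v(s,t)/v(0,t)\le 1+P(s,t)$ that feeds the lower estimate on $\partial_s\ln v$ is a consequence of \eqref{eqn3a_1} itself, so the proof cannot be a one-line integration. The correct device — and the reason the paper defers to \cite[Lemma 3.1]{wang_finite-time_2023} — is to define $\Phi(s):=(1+P(s,t))\,v(0,t)-v(s,t)$ (or its logarithmic analogue), check $\Phi(0)=0$, and show $\Phi'(s)\ge 0$ wherever $\Phi(s)\ge 0$ by substituting the integral identity for $\partial_s v$; a standard ODE comparison / continuity argument then gives $\Phi\ge0$ on all of $(0,R)$, hence $v(s,t)\le(1+P(s,t))v(0,t)$, and plugging this back into $s^{n-1}\partial_s\ln v=\tfrac1v\int_0^s\rho^{n-1}uv\ge \tfrac{v(0,t)}{v(s,t)}U(s,t)$ yields \eqref{eqn3a_1}. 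I would also need to justify the boundary behavior $\rho^{n-1}\partial_\rho v\to0$ as $\rho\to0$ and the strict positivity of $v(0,t)$, both of which follow from the regularity and positivity statements in Proposition~\ref{prop_local}.
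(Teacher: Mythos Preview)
Your final $\Phi$-strategy has the sign backwards, and this is not a technicality but a genuine obstruction. The auxiliary inequality you are trying to establish, $v(s,t)\le(1+P(s,t))\,v(0,t)$, is in fact \emph{false}: integrating the very conclusion \eqref{eqn3a_1} from $0$ to $s$ gives $\ln v(s,t)-\ln v(0,t)\ge\ln(1+P(s,t))$, i.e.\ $v(s,t)\ge(1+P(s,t))\,v(0,t)$, with strict inequality whenever $u\not\equiv0$ on $(0,s)$. Direct computation confirms this: with $\Phi(s)=(1+P(s,t))v(0,t)-v(s,t)$ one has
\[
\Phi'(s)=s^{-n+1}\Big[U(s,t)\,v(0,t)-\int_0^s\rho^{n-1}u(\rho,t)v(\rho,t)\,d\rho\Big]\le0
\]
because $v(\rho,t)\ge v(0,t)$ for all $\rho$; since $\Phi(0)=0$ this forces $\Phi\le0$ on $(0,R)$, the opposite of what you want. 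Consequently the chain $\partial_s\ln v\ge\frac{v(0,t)}{v(s,t)}s^{-n+1}U\ge\frac{s^{-n+1}U}{1+P}$ cannot be closed this way, and no continuity/bootstrap argument will rescue it.

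The device that actually works (and is the one the paper points to in \cite{wang_finite-time_2023}; the paper itself gives no proof) is a Riccati comparison applied directly to the logarithmic derivative. Set $z(s):=s^{n-1}\partial_s\ln v(s,t)$; differentiating the identity $s^{n-1}v_s=zv$ and using \eqref{eqn_vr} yields the exact equation
\[
z'(s)=s^{n-1}u(s,t)-s^{-n+1}z(s)^2,\qquad z(0)=0.
\]
The candidate lower barrier $\bar z(s):=\dfrac{U(s,t)}{1+P(s,t)}$ satisfies $\bar z(0)=0$ and, by direct computation,
\[
\bar z'(s)=\frac{s^{n-1}u}{1+P}-s^{-n+1}\bar z^2\le s^{n-1}u-s^{-n+1}\bar z^2,
\]
so $\bar z$ is a subsolution of the same scalar ODE. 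Standard ODE comparison (the right-hand side is locally Lipschitz in the $z$-variable) then gives $z(s)\ge\bar z(s)$ on $(0,R)$, which is exactly \eqref{eqn3a_1}. You gestured at the ``Riccati-type structure'' early on before abandoning it; that was the thread to pull.
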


As a result of Lemma~\ref{lem3a}, the value of \(v\) at the origin can be controlled by \(M\) with a quantity involving  \(U\).
\begin{lem}\label{lem3b}
Let $(u,v)$ be a solution given in Proposition~\ref{prop_local} and $U$ be defined in \eqref{AUG25}.
It holds that 
\begin{equation}\label{eqn3b_1}
v(0,t)\le\frac{M}{1+\psi(t)}\quad\text{ for all } t\in(0,T_{max}),
\end{equation}
where \(\psi(t):=\int_0^R\rho^{-n+1}U(\rho,t)d\rho\).
\end{lem}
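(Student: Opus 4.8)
\textbf{Proof proposal for Lemma~\ref{lem3b}.} The plan is to integrate the pointwise lower bound for $\partial_s(\ln v)$ supplied by Lemma~\ref{lem3a} across the whole interval $(0,R)$ and to recognize the right-hand side as an exact derivative. Concretely, set
\[
g(s,t):=\int_0^s \rho^{-n+1} U(\rho,t)\, d\rho,\qquad (s,t)\in[0,R]\times(0,T_{max}),
\]
so that $\partial_s g(s,t)=s^{-n+1}U(s,t)$ and, by definition, $g(0,t)=0$ and $g(R,t)=\psi(t)$. With this notation Lemma~\ref{lem3a} reads
\[
\partial_s(\ln v)(s,t)\ \ge\ \frac{\partial_s g(s,t)}{1+g(s,t)}\ =\ \partial_s\big(\ln(1+g(s,t))\big)\qquad\text{for }(s,t)\in(0,R)\times(0,T_{max}).
\]

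First I would justify that all quantities involved are finite and the integrations are legitimate. Since $u$ is continuous on $\overline\Omega\times(0,T_{max})$ and radial, one has $U(\rho,t)\le \tfrac1n\|u(\cdot,t)\|_\infty \rho^n$, hence $\rho^{-n+1}U(\rho,t)\le \tfrac1n\|u(\cdot,t)\|_\infty\,\rho$ is integrable near $\rho=0$; thus $g(\cdot,t)$ is well defined, continuous on $[0,R]$, and $\psi(t)<\infty$. Moreover, by Proposition~\ref{prop_local}, $v$ is positive and continuous on $\overline\Omega\times(0,T_{max})$, so $\ln v(\cdot,t)$ is continuous on $[0,R]$ and smooth on $(0,R)$, and the same estimate on $U$ together with \eqref{eqn1e_1}-type identities shows $\partial_s v(s,t)\to 0$ as $s\to0^+$, so no boundary contribution arises at the origin.

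Next I would integrate the displayed inequality in $s$ over $(0,R)$. Using $v(R,t)=M$ from the boundary condition in \eqref{main}, $g(0,t)=0$, and $g(R,t)=\psi(t)$, this yields
\[
\ln M-\ln v(0,t)\ =\ \int_0^R \partial_s(\ln v)(s,t)\, ds\ \ge\ \int_0^R \partial_s\big(\ln(1+g(s,t))\big)\, ds\ =\ \ln\big(1+\psi(t)\big),
\]
i.e.\ $\ln v(0,t)\le \ln\!\big(M/(1+\psi(t))\big)$. Exponentiating gives \eqref{eqn3b_1}. I do not anticipate a serious obstacle here: the only point requiring a modicum of care is the integrability/boundary behavior at $s=0$ discussed above, which is immediate from the regularity in Proposition~\ref{prop_local} and the crude bound on $U$; the rest is the observation that the bound in Lemma~\ref{lem3a} is a perfect logarithmic derivative, after which the estimate follows by a one-line integration.
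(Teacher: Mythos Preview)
Your proposal is correct and follows essentially the same approach as the paper: you integrate the inequality of Lemma~\ref{lem3a} over $(0,R)$, recognize the right-hand side as $\partial_s\ln(1+g(s,t))$, and use $v(R,t)=M$ together with $g(0,t)=0$, $g(R,t)=\psi(t)$ to conclude. The only difference is that you add explicit remarks on integrability and boundary behavior at $s=0$, which the paper leaves implicit.
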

\begin{proof}
From \eqref{eqn3a_1}, we have
\begin{equation*}
\begin{split}
\ln\frac{M}{v(0,t)}&=\int_0^R\partial_s(\ln v(s,t))ds\\
&\ge\int_0^R\partial_s(\ln(1+\int_0^s \rho^{-n+1} U(\rho,t)d\rho))ds\\
&=\ln(1+\int_0^R \rho^{-n+1}U(\rho,t)d\rho),
\end{split}
\end{equation*}
which immediately gives \eqref{eqn3b_1}.
\end{proof}
\subsection{ODI for a functional involving \(U\)}
We derive an ODI for \(\phi(t)\), which is crucial for our blow-up argument.
\begin{lem}\label{lem3c}
Let $(u,v)$ be a solution given in Proposition~\ref{prop_local} and $U$ be defined in \eqref{AUG25}. Assume that \(D\) satisfies \eqref{assume_D3} with some \(K_D>0\), and that
\[0<m<\frac{2}{n}.\]
Then, for any fixed \(\alpha\in(n-3,n(1-m)-1)\), a quantity
\[
\phi(t):=\int_0^R s^{-\alpha} U(s,t) ds\]
 belongs to \(C^0([0,T_{max}))\cap C^1((0,T_{max}))\), and satisfies 
\begin{equation}\label{eqn3c_1}
\partial_t \phi(t)\ge -C_1L^m + C_2\frac{(M-v(0,t))^2}{M}\end{equation}
with some \(C_1>0\) and  \(C_2>0\).
\end{lem}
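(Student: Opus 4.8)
The plan is to differentiate $\phi(t)=\int_0^R s^{-\alpha}U(s,t)\,ds$ in time, using $\partial_t U$ computed from the mass-accumulation equation, then integrate by parts to convert the diffusion term into manageable boundary plus bulk contributions, and finally extract the advection term $\partial_s U\,\partial_s v$ which is where the driving positive quantity $(M-v(0,t))^2/M$ must come from.

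First I would verify the claimed regularity of $\phi$: since $U\in C^0([0,T_{max});C^1([0,R]))\cap C^{2,1}((0,R]\times(0,T_{max}))$ by Lemma~\ref{lem1c}, and the weight $s^{-\alpha}$ is integrable near $0$ precisely when $\alpha<n$ combined with $U(s,t)\lesssim s^n$ near the origin (from $u\in L^\infty_{loc}$ in time, though one must be careful that this local-in-time bound suffices), the integral and its time-derivative are well-defined; the constraint $\alpha>n-3$ will be what makes an integration-by-parts boundary term at $s=0$ vanish, and $\alpha<n(1-m)-1$ will be what controls the diffusion term. Then, from \eqref{eqn1c_2} (which holds without the $C_*$ truncation, using the exact identity $\partial_t U = s^{n-1}D(s^{-n+1}\partial_s U)\partial_s(s^{-n+1}\partial_s U)+\partial_s U\,\partial_s v$), I would write
\[
\phi'(t)=\int_0^R s^{-\alpha}s^{n-1}D(\cdots)\partial_s(s^{-n+1}\partial_s U)\,ds+\int_0^R s^{-\alpha}\partial_s U\,\partial_s v\,ds=:J_1+J_2.
\]
For $J_1$, I would integrate by parts once (moving $\partial_s$ off the factor $s^{-n+1}\partial_s U$), bound $D(\xi)\le K_D(1+\xi)^{m-1}$ using \eqref{assume_D3}, and use the upper bound $\partial_s U \le s^{n-1}\|u\|_\infty$ together with $U\le L$ crudely; the key is that the resulting integrals are of the form $\int_0^R s^{(\text{power})}(1+s^{-n+1}\partial_s U)^{m-1}(\cdots)\,ds$ and that for $m<1$ the factor $(1+\cdot)^{m-1}\le 1$, so one ends up estimating $\int_0^R s^{-\alpha+n-2}(\cdots)\,ds$; here $\alpha<n(1-m)-1<n-1$ keeps things integrable, and one wants the bound to come out as $-C_1 L^m$, meaning the mass $L=\int_0^R s^{n-1}u_0$ enters through an $L^\infty$–$L^1$ interpolation of $u$ against the weight (this is the delicate bookkeeping — the power $m$ on $L$ signals that Hölder/Young is applied to split $(1+s^{-n+1}\partial_s U)^{m-1}\partial_s^2(\cdots)$ type terms, or more likely that after integration by parts the diffusion contribution is literally of the form $-\int s^{-\alpha}\partial_s U\cdot(\text{something})$ bounded by a constant times $L^m$).

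For $J_2$, the heart of the matter: I would use $\partial_s v = s^{-n+1}\int_0^s \rho^{n-1}uv\,d\rho$ from \eqref{eqn_vr}, but more profitably rewrite $J_2$ by another integration by parts and invoke the second equation to get $\int_0^R (\text{stuff})\,uv$, then use $v\ge v(0,t)$ (since $\partial_s v>0$ by Lemma~\ref{lem1a}) on part of the domain and the bound $v(0,t)\le M/(1+\psi(t))$ from Lemma~\ref{lem3b} — or rather, the quantity $M-v(0,t)$ should appear by writing $\int_0^R \partial_s v\,ds = M - v(0,t)$ after extracting a suitable weight. Concretely I expect: integrate $J_2$ by parts to get a term $\sim \int_0^R s^{-\alpha-1}U\,\partial_s v$ type plus $[s^{-\alpha}U\partial_s v]$ boundary terms, then bound $\partial_s v$ below near a fixed radius and relate $\int \partial_s v$ to $M-v(0,t)$, and use Cauchy–Schwarz on $\int \partial_s v = \int s^{(n-1)/2}\cdot s^{-(n-1)/2}\partial_s v$ together with $\int s^{n-1}(\partial_s v)^2/v^2 \le$ (mass) from the computation already displayed in the proof of Lemma~\ref{lem1a}, to produce the square $(M-v(0,t))^2$ divided by $M$ (the $1/M$ coming from $v\le M$ in the denominator $v^2$).

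The main obstacle I anticipate is handling the diffusion term $J_1$ carefully enough to get the clean lower bound $-C_1 L^m$ with a constant independent of time and of $T_{max}$: the integrand involves $\partial_s(s^{-n+1}\partial_s U)$ which is second-order and not sign-definite, so one genuinely needs to integrate by parts and absorb the awkward pieces, and the degenerate/singular behavior of $(1+s^{-n+1}\partial_s U)^{m-1}$ as $s\to 0$ (where $s^{-n+1}\partial_s U = u \to u(0,t)$, finite, so actually not singular — good) must be tracked. A secondary subtlety is ensuring all boundary terms at $s=0$ vanish, which is exactly why $\alpha>n-3$ is imposed: near $s=0$, $U\sim s^n$, $\partial_s U\sim s^{n-1}$, so a boundary term like $s^{-\alpha}U\partial_s v\sim s^{-\alpha}\cdot s^n\cdot s\to 0$ needs $\alpha<n+1$ (automatic), but a term like $s^{-\alpha+n-2}\partial_s U\sim s^{-\alpha+2n-3}$ needs $\alpha<2n-3$, and the borderline one forcing $\alpha>n-3$ is presumably $s^{1-\alpha}\cdot(\partial_s v)\cdot(\text{const})$ or a term where only the crude bound $\partial_s v\le (n-2)MC_*/s$ from Lemma~\ref{lem1b} is available, giving $s^{-\alpha}\cdot U\cdot s^{-1}$... in any case the exponent arithmetic is routine once the integration-by-parts structure is fixed. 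I would finish by combining $J_1\ge -C_1L^m$ and $J_2\ge C_2(M-v(0,t))^2/M$ to obtain \eqref{eqn3c_1}.
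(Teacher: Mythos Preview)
Your decomposition $\phi'=J_1+J_2$ and the idea of recovering $(M-v(0,t))^2/M$ from $\int_0^R\partial_s v=M-v(0,t)$ via Cauchy--Schwarz are both correct in spirit, but each half of your plan has a genuine gap. For $J_1$, bounding $(1+\cdot)^{m-1}\le 1$ before integrating by parts destroys the structure that produces the exponent $m$ on $L$: you would be left controlling $|\partial_s u|$, which you cannot. The paper's device is to recognise $s^{n-1}D(u)\partial_s u=s^{n-1}\partial_s\bigl(\tilde D(u)\bigr)$ with $\tilde D(\xi)=\int_0^\xi D$, so that one integration by parts against $s^{n-1-\alpha}$ followed by $\tilde D(\xi)\le (K_D/m)\xi^m$ gives $J_1\ge -C\int_0^R s^{\,n-2-\alpha-m(n-1)}(\partial_s U)^m\,ds$; H\"older with exponents $1/(1-m),\,1/m$ then yields $-C_1L^m$, and the condition $\alpha<n(1-m)-1$ is what makes both the boundary term at $s=0$ and the weight integral finite.

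For $J_2$ your proposed splitting $\int\partial_s v=\int s^{(n-1)/2}\cdot s^{-(n-1)/2}\partial_s v$ fails outright, since $\int_0^R s^{-(n-1)}\,ds=\infty$ for $n\ge 2$; the estimate $\int s^{n-1}(\partial_s v/v)^2\le L$ from Lemma~\ref{lem1a} does not rescue this. Likewise, integrating $J_2$ by parts directly as you suggest produces an uncontrolled negative term $-\int s^{-\alpha}U\,uv$. The paper instead substitutes the pointwise bound $\partial_s U\ge \partial_s(s^{n-1}\partial_s\ln v)$ (immediate from \eqref{eqn_vr}) into $J_2$, integrates by parts, and uses \eqref{eqn_vr} once more to replace $\partial_{ss}v$; this manoeuvre regenerates $-I_2$ on the right, giving the self-referential inequality $2I_2\ge (n+\alpha-1)\int_0^R s^{\,n-\alpha-2}(\partial_s v)^2/v\,ds$, hence $I_2\ge \tfrac{n+\alpha-1}{2M}\int s^{\,n-\alpha-2}(\partial_s v)^2$. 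Cauchy--Schwarz with \emph{this} weight gives $(M-v(0,t))^2\le\bigl(\int s^{\,n-\alpha-2}(\partial_s v)^2\bigr)\bigl(\int_0^R s^{\,\alpha-n+2}\,ds\bigr)$, and it is the finiteness of the second factor---equivalent to $\alpha>n-3$---that is the true role of the lower bound on $\alpha$, not a boundary term as you guessed.
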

\begin{proof}
Let \(t\in(0,T_{max})\). Obviously, \(\phi\) is well-defined for such \(\alpha\), and the regularity property of \(\phi\) is guaranteed via the dominated convergence theorem. In order to see \eqref{eqn3c_1}, we first define 
\begin{equation}\label{assume_tildeD}
\tilde{D}(\xi):=\int_0^\xi D(\sigma) d\sigma\quad\text{ for } \xi\ge 0.
\end{equation}
We recall that  \(U\)satisfies the following equation \eqref{eqn1c_2}  in \((0,R)\times (0,T_{max})\):
\begin{equation*}
\partial_t U(s,t)= s^{n-1}D(s^{-n+1} \partial_s U(s,t))\partial_s(s^{-n+1} \partial_s U(s,t))+\partial_s U(s,t) \partial_s v(s,t).
\end{equation*}
Then, \(\phi(t)\) is formulated as the following:
\begin{equation*}
\begin{split}
\partial_t\phi(t)=& \int_{0}^{R} s^{n-1-\alpha} \partial_s(\tilde{D}(s^{-n+1} \partial_s U(s,t))) ds \\
&+ \int_{0}^{R} s^{-\alpha} \partial_s U(s,t)\partial_s v(s,t) ds\\
=:&I_1(t)+I_2(t).
\end{split}
\end{equation*}
From the assumption \eqref{assume_D3} and \eqref{assume_tildeD}, we have
\[\tilde{D}(\xi)\le K_D\int_0^\xi(1+\sigma)^{m-1}d\sigma\le\frac{K_D}{m}\xi^m,\]
which entails, via integration by parts, that
\begin{equation}\label{eqn3c_2}
\begin{split}
I_1(t)&=-(n-1-\alpha)\int_0^R s^{n-2-\alpha}\tilde{D}(s^{-n+1} \partial_s U(s,t)) ds\\
&+\left\{s^{n-1-\alpha}\tilde{D}(s^{-n+1} \partial_s U(s,t))\right\}\bigg|_{s=0}^{R}\\
&\ge-\frac{K_D(n-1-\alpha)}{m}\int_0^R s^{n-2-\alpha-m(n-1)}(\partial_s U(s,t))^m ds,
\end{split}
\end{equation}
where we used the condition \(\alpha<n(1-m)-1\) to see that
\begin{equation*}
\lim_{s\rightarrow0}|s^{n-1-\alpha}\tilde{D}(s^{-n+1} \partial_s U(s,t))| \le \lim_{s\rightarrow0}s^{n-1-\alpha}\norm{u(\cdot,t)}_\infty^m=0.
\end{equation*}
Applying H\"older inequality and integration by parts, we further compute the rightmost term of  \eqref{eqn3c_2} as
\begin{equation}\label{eqn3c_3}
\begin{split}
\int_0^R &s^{n-2-\alpha-m(n-1)}(\partial_s U(s,t))^m ds\\
&\le\left(\int_0^R s^\frac{n-2-\alpha-m(n-1)}{1-m}ds\right)^{1-m}\left(\int_0^R \partial_s U(s,t) ds\right)^m.
\end{split}
\end{equation}
Here, the first integral on the right-hand side is finite because \(\alpha<n(1-m)-1\) implies 
\[
\frac{n-2-\alpha-m(n-1)}{1-m}>-1.
\] 
Hence, in light of \eqref{eqn3c_2} and \eqref{eqn3c_3}, one can find \(C_1>0\) such that
\[I_1(t)\ge-C_1(U(R,t)-U(0,t))^m=-C_1 L^m.\]
In order to estimate \(I_2(t)\), we first observe from \eqref{eqn_vr} that 
\begin{equation}\label{eqn3c_4}
\partial_s U(s,t)=\frac{\partial_s(s^{n-1}\partial_sv(s,t))}{v}\ge\partial_s(s^{n-1}\partial_s \ln v(s,t)).
\end{equation}
for all \(s\in(0,R)\). Using \eqref{eqn3c_4}, $v_{s}\ge0$, and integration by parts, we find 
\begin{equation}\label{eqn3c_5}
\begin{split}
I_2(t)&\ge\int_{0}^{R} s^{-\alpha} \partial_s(s^{n-1}\partial_s \ln v(s,t))\partial_s v(s,t) ds\\
&=\alpha\int_0^R s^{n-\alpha-2}\partial_s\ln v(s,t)\partial_s v(s,t)ds\\
&\quad-\int_0^R s^{n-\alpha-1}\partial_s\ln v(s,t)\partial_{ss} v(s,t)ds\\
&\quad+\left\{s^{n-\alpha-1}\partial_s\ln v(s,t)\partial_s v(s,t)\right\}\bigg|_{s=0}^{R}.
\end{split}
\end{equation}
 We employ \eqref{eqn_vr} again to see that
\begin{equation*}
\begin{split}
&-\int_0^R s^{n-\alpha-1}\partial_s\ln v(s,t)\partial_{ss} v(s,t)ds\\
&\quad=-\int_0^R s^{n-\alpha-1}\partial_s\ln v(s,t)\left(u(s,t)v(s,t)-\frac{n-1}{s}\partial_s v(s,t)\right) ds\\
&\quad=-I_2(t)+(n-1)\int_0^R s^{n-\alpha-2}\partial_s\ln v(s,t)\partial_s v(s,t)ds,
\end{split}
\end{equation*}
which yields, due to the nonnegativity of the last term in \eqref{eqn3c_5} and  \eqref{prop_v}, that
\begin{equation}\label{eqn3c_6}
\begin{split}
I_2(t)&\ge\frac{n+\alpha-1}{2}\int_0^R s^{n-\alpha-2}\partial_s \ln v(s,t)\partial_s v(s,t)ds\\
&\ge\frac{n+\alpha-1}{2M}\int_0^R s^{n-\alpha-2}(\partial_s v(s,t))^2 ds.
\end{split}
\end{equation}
We observe from H\"older inequality that 
\begin{equation}\label{eqn3c_7}
\begin{split}
(M-v(0,t))^2&=\left(\int_0^R \partial_s v(s,t)ds\right)^2\\
&\le\left(\int_0^R  s^{n-\alpha-2}(\partial_s v(s,t))^2 ds\right)\cdot\left(\int_0^R s^{-n+\alpha+2}ds\right),
\end{split}
\end{equation}
where the last integral is finite due to the condition \(n-3<\alpha\). Therefore, combining \eqref{eqn3c_6} and \eqref{eqn3c_7}, it follows that 
\begin{equation*}
I_2(t)\ge C_2\frac{(M-v(0,t))^2}{M}
\end{equation*}
with some \(C_2>0\). Hence, we can conclude \eqref{eqn3c_1}.
 \end{proof}
We are ready to present the proof of Theorem~\ref{thm3}. 

\begin{proof}[\textit{Proof of Theorem~\ref{thm3}.}]\quad
Let \(m\in(0,\frac{2}{n})\), and fix \(\alpha\) as in Lemma~\ref{lem3c}. Plugging \eqref{eqn3b_1} into \eqref{eqn3c_1} shows that 
\begin{equation}\label{eqn_proof1}
\partial_t \phi(t)\ge-C_1L^m+C_2 M\frac{\psi^2(t)}{(1+\psi(t))^2}
\end{equation}
for all \(t\in(0,T_{max})\), where \(C_1\) and \(C_2\) are constants specified in Lemma~\ref{lem3c}. For \(M>0\) and \(\xi\ge0\), let us define 
\[
\omega(\xi,M):=-2C_1L^m+C_2M\frac{\xi^2}{(1+\xi)^2},
\]
 and fix a constant depending \(u_0\)
\[
C^*\equiv C^*(u_0):= \frac{1}{2}R^{-n+\alpha+1}\phi(0).
\]
We choose \(M^*=M^*(u_0)>0\) such that 
\begin{equation*}
M^*\ge\frac{2C_1L^m}{C_2}\cdot\left(\frac{1+C^*}{C^*}\right)^2.
\end{equation*}
Since \(0\le x\mapsto x^2/(1+x)^2\) is increasing,  it holds that for \(\xi\ge C^*\) and \(M\ge M^*\)
\begin{equation}\label{eqn_proof3}
\omega(\xi,M)\ge -2C_1L^m+C_2M^*\left(\frac{C^*}{1+C^*}\right)^2\ge0.
\end{equation}
Upon fixing \(M\ge M^*\), and defining \(\phi\) and \(\psi\) as in Lemma~\ref{lem3c}, the continuity of \(\phi\) allows us to find \(T>0\) such that \(\phi(t)>\frac{\phi(0)}{2}\) for all \(t\in(0,T)\). This establishes the existence of
\[
T^*=\sup\left\{T\in(0,T_{max}) : \phi(t)>\frac{\phi(0)}{2}\,\text{ for all }\, t\in(0,T)\right\}.
\]
We show $T^*=T_{max}$. Indeed,
since it holds that
\[
\psi(t)\ge R^{-n+\alpha+1}\phi(t)\ge \frac{1}{2}R^{-n+\alpha+1}\phi(0)= C^*
\]
 for all \(t\in(0,T^*)\), it follows, in conjunction with \eqref{eqn_proof1} and \eqref{eqn_proof3}, that
 \begin{equation}\label{eqn_proof4}
\partial_t\phi(t)\ge \omega(\psi(t),M)+C_1L^m\ge C_1L^m
\end{equation}
for all \(t\in(0,T^*)\). Solving the ODI \eqref{eqn_proof4} yields 
\begin{equation}\label{eqn_proof5}
\phi(t)\ge \phi(0)+C_3t\quad\text{ for all }t\in(0,T^*)
\end{equation}
 with some \(C_3>0\), necessitating \(T^*=T_{max}\) because if \(T^*<T_{max}\), the continuity of \(\phi\) would imply \(\phi(T^*)=\frac{\phi(0)}{2}\), contradicting the growth established in \eqref{eqn_proof5}. 
Hence, in accordance with \eqref{eqn_proof5}, we obtain
\[
C_3t+\phi(0)\le\phi(t)\le R^{n-\alpha+1}\|u(\cdot,t)\|_\infty,
\]
which gives the desired results.
In particular, in the three-dimensional case, since \(\alpha\) can be set to satisfy \(0<\alpha<\min\left\{3(1-m)-1, 1\right\}\), it holds that 
\begin{equation*}
C_3t+\phi(0)\le\phi(t)=\int_0^R s^{-\alpha}U(s,t)ds\le\frac{L}{1-\alpha}R^{1-\alpha}
\end{equation*}
for all \(t\in(0,T_{max})\), which shows that \(T_{max}\) must be finite, as desired.
\end{proof}

\section*{Acknowledgement}
J. Ahn is supported by NRF grant No. RS-2024-00336346.
K. Kang is supported by NRF grant No. RS-2024-00336346. 
D. Kim is supported by the National Research Foundation of Korea(NRF) grant funded by the Korea government(MSIT)(grant No. 2022R1A4A1032094).

\bibliographystyle{abbrv}
\bibliography{reference_final}

\end{document}